\tikzstyle{block} = [rectangle, 
\tikzstyle{arrow} = [ultra thick,->,>=latex]
\newcommand{\R}{\mathbb{R}}
\newcommand{\Z}{\mathbb {Z}}
\newcommand{\calP}{{\mathcal P}}
\DeclareMathOperator*{\argmin}{arg\,min}
\DeclareMathOperator{\supp}{supp}
\newcommand{\mubar}{\bar{\mu}}
\newtheorem{thm}{Theorem}
\newtheorem*{thm*}{Theorem}
\newtheorem{prp}[thm]{Proposition}
\newtheorem{lma}[thm]{Lemma}
\theoremstyle{definition}
\title{The GenCol 
algorithm for high-dimensional optimal transport: general formulation and application to barycenters and Wasserstein splines 
}
\author{
    Gero Friesecke\thanks{Department of Mathematics, Technische Universität München}\\
    \nolinkurl{gf@ma.tum.de}
    \and
    Maximilian Penka\footnotemark[1]\\
    \nolinkurl{penka@ma.tum.de}
}
\begin{document}
\maketitle

\begin{abstract}\footnotesize
\noindent We extend the recently introduced genetic column generation algorithm for high-dimensional multi-marginal optimal transport from symmetric to general problems. We use the algorithm to calculate accurate mesh-free Wasserstein barycenters and cubic Wasserstein splines.
\end{abstract}

\section{Introduction}
Solving multi-marginal optimal transport problems is an important task in many applications; see e.g. \cite{pass-15, Nen-PhD-16, FriGorGer-22}. Current algorithms for optimal transport suffer from the curse of dimension when transferred from the two-marginal to the multi-marginal setting. The curse not only concerns computation speed, but also memory demand to even just store any transportation plan. The principal idea of the Genetic Column Generation (GenCol) algorithm, introduced in \cite{FriSchVoe-21} for symmetric problems arising in electronic structure and extended here to general optimal transport problems, is to keep and exploit, rather than smear out, the extreme sparsity of exact optimizers. 

To explain this idea in more detail, we begin by stating the general multi-marginal optimal transport (MMOT) problem: 
given $N$ probability measures $\mu_1$, ..., $\mu_N$ on, say, a region $X \subset \R^d$, 
\begin{align} 
 &  \mbox{ minimize } \int_{X^N} c(x_1,\dots,x_N) \, d\gamma(x_1,\dots,x_N)\mbox{ over probability measures }\gamma \mbox{ on }X^N \nonumber \\
 &  \mbox{ subject to }       \int_{\pi_k^{-1}(A)}d\gamma = \int_Ad\mu_k \quad \forall \mbox{ measurable } A\subset X, \forall k = 1,...,N. \label{MMOT-pb}   
\end{align}
Here $\pi_k\, :\, X^N \to X$ is the projection on the k-th component and $c \, : \, X^N \to \R$ is a given cost function. Validity of the marginal constraints in \eqref{MMOT-pb} is denoted $\gamma\mapsto\mu_1,...,\mu_N$, and optimizers are called optimal plans.

If all marginals $\mu_k$ are discrete measures -- e.g. after discretization -- the problem becomes a linear program (LP). But the number of unknowns grows exponentially with the number of marginals, 
putting many applications out of reach. 
Already $N=5$ two-dimensional marginals (say, images) on a 50$\times$50 grid result in an intractable problem size of $10^{17}$ variables.  

The currently widely used Sinkhorn or IBP algorithm \cite{cuturi2014fast,BenEtAl-15}, while very successful for two-marginal problems, is not well suited for the computation of multi-marginal problems beyond a small number of marginals. 
%
This is because the utilized smoothing makes the optimal plan positive everywhere, prohibiting already its storage. Setting values below a threshold to zero (as is sometimes done in Sinkhorn computations) does not cure the problem. The optimal plan would stay positive in a small-diameter region around the support; but the curse of dimension already occurs locally in such a region. For 10 two-dimensional marginals (images) on a 50$\times$50 grid, a small ball of the radius of 5 gridpoints around a single support point already contains more than $10^{12}$ gridpoints.

By contrast, 
we show below (see section \ref{sec:discrete}) that the discrete MMOT problem \eqref{MMOT-pb} possesses 
extremely sparse solutions. If the marginals $\mu_k$ are supported on $\ell_k$ points, regardless of the cost function there exist optimal plans with support size less than 
the {\it sum} of the $\ell_k$, instead of the {\it product} of the $\ell_k$ needed for general plans. For 10 marginals on a 50$\times$50 grid, this amounts to a support size of just 25$\,$000.  
The GenCol algorithm for optimal transport, introduced in \cite{FriSchVoe-21} in the context of symmetric problems arising in electronic structure, is designed to preserve  this sparsity and make large problems accessible.

In this paper we generalize the algorithm to general multi-marginal optimal transport problems.

In short, GenCol works as follows. One iteratively solves only sparse LPs obtained by restricting the MMOT problem to a small subset $\Omega$ of $X^N$ whose size is $\beta$ times the support size of optimal plans, where $\beta$ is a hyperparameter of the method (taken to be 3 in all our numerical examples), and updates $\Omega$ by a simple but powerful genetic learning method. An indication of problem sizes that now become accessible is given in Figure \ref{fig:size}. 

\begin{figure} 
\includegraphics[width=0.9\textwidth]{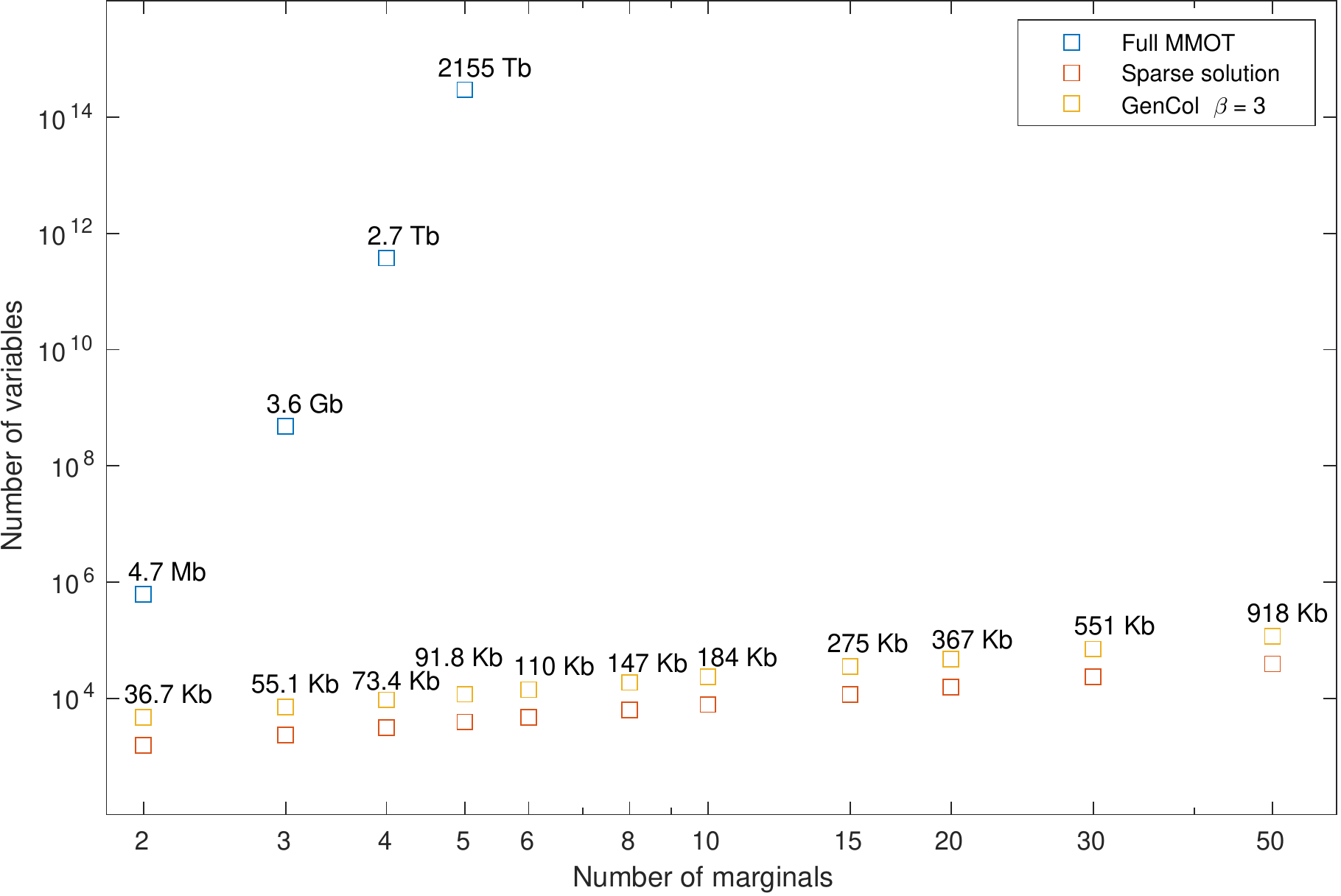}
\caption{Number of variables and storage cost in double precision for a full MMOT plan, a sparse exact solution, and a sparse temporary solution in GenCol. Here we considered the MNIST dataset where each marginal is a 28$\times$28 image. The cost for a full plan grows exponentially with the number of marginals, and already exceeds 2 Terabyte for only 4 marginals. By contrast, the cost in GenCol grows only linearly, making many-marginal problems accessible.}

\label{fig:size}
\end{figure}

 As a proof of concept we use the GenCol algorithm to (1) compute the optimal transport plans in the multi-marginal formulation of the Wasserstein barycenter problem to produce the mesh-free Wasserstein barycenter of MNIST digits and obtain accurate morphed shapes, (2) calculate cubic spline interpolations with respect to the Wasserstein distance.

In our first example, the Wasserstein barycenter problem introduced in the celebrated paper \cite{agueh2011barycenters}, the marginal measures $\mu_1,...,\mu_N$ represent given datasets and the cost function $c(x_1,...,x_N)$ in \eqref{MMOT-pb} is the {\it mean squared deviation from the classical barycenter} of the $x_i$, 
\begin{equation} \label{barycost}
  c(x_1,...,x_N) =  \sum_{i=1}^N\lambda_i|x_i-B_\lambda(x_1,...,x_N)|^2 
  \mbox{ with }
    B_\lambda(x_1,...,x_N) := \sum_{i=1}^N \lambda_i x_i.
\end{equation}
Here the $\lambda_i$ are given positive weights with $\sum_{i=1}^N\lambda_i = 1$. From an optimal solution $\gamma^*$ to this MMOT-problem, the Wasserstein barycenter of the measures $\mu_i$ is then obtained as the push-forward under the barycenter map, 
\begin{align} \label{pfw}
    \mubar = (B_\lambda)_\#\gamma^*.
\end{align}
For this particular problem, there is an alternative ``coupled two-marginal'' formulation, 
\begin{equation}\label{C2M}
  \mubar =  \argmin_{\mu \in \mathcal{P}(X)} \sum_{k = 1}^N\lambda_k W_2(\mu,\mu_k)^2,
\end{equation}
where $W_2$ is the Wasserstein-2 distance. On $X=\R^d$ these formulations are equivalent \cite{agueh2011barycenters},  
in the sense that $\mubar$ is a minimizer of \eqref{C2M} if and only if it is of the form \eqref{pfw} for some minimizer of \eqref{MMOT-pb}, \eqref{barycost}. The coupled two-marginal formulation can be efficiently simulated with the Sinkhorn algorithm  \cite{cuturi2014fast}, see also 
\cite{cuturi2016smoothed,schmitzer2019stabilized,PeyCut-19}. Other recent approaches to compute the coupled two marginal formulation can be found in \cite{yang2021fast, heinemann2022randomized}.
The multi-marginal formulation has to our knowledge not previously been simulated except for 3 marginals \cite{BenEtAl-15}, but we argue that doing so provides a worthwhile alternative method which is very accurate and achieves finer resolution. The latter point is related to the interesting fact that {\it after discretization the two formulations are no longer equivalent!} To see this, suppose the $\mu_k$ are discrete measures on a 2D grid $X_h=[0,1]^2\cap h\Z^2$ of meshsize $h$. Replacing $X$ by $X_h$ in \eqref{pfw} and \eqref{MMOT-pb}, the coupled two-marginal solution is again a measure on $X_h$, whereas the multi-marginal plan $\gamma^*$ is a measure on the $N$-fold cartesian product $(X_h)^N$ and thus its push-forward \eqref{pfw} (say, for $\lambda_1=...=\lambda_N=\tfrac{1}{N}$) lives on the $N^2$ times finer grid $[0,1]\cap (\tfrac{h}{N}\Z)^2$, and is hence more accurate. In fact, interpreting the discrete marginals $\mu_k$ and the multi-marginal barycenter $\mubar$ as sums of Dirac measures on $\R^2$ positioned at the gridpoints, $\mubar$ is the true (mesh-free) barycenter in $\calP(\R^2)$ of the marginals. For MNIST digits, the higher accuracy of multi-marginal barycenters computed with the GenCol algorithm is clearly visible in Figure \ref{fig:MNISTallBC}.


%
%
Our second example, the construction of interpolating curves of higher smoothness in Wasserstein space, is a very recent development in optimal transport \cite{benamou2019second, chen2018measure}. One wants to interpolate given measures $\mu_{t_i}$, $0=t_0< ... < t_N =1$, by a smooth path of measures. This corresponds to a multi-marginal problem \eqref{MMOT-pb}, with a prototypical cost function given by  the cubic spline energy 
\begin{equation} \label{splinecost}
    c(x_0,...,x_N) = \min_{{\rm paths }\, x \, : \, [0,1]\to X} \left\{\int_0^1 |\ddot{x}(t)|^2 dt \, \Big| \, x\in C^2, \, x(t_i)=x_i \, \forall i=0,...,N \right\}.
\end{equation}
(An explicit expression in terms of the $x_i$ and $t_i$ can be found in Appendix \ref{apdx:CubicSplineCost}; for equidistant time steps $\tau$ a good approximation is  $\tilde{c}(x_0,...,.x_N)=\sum_{i=1}^{N-1}|x_{i+1}-2x_i+x_{i-1}|^2/\tau^3$.) The interpolating measures $\mu_t$, $t\in[0,1]$, are then obtained by 
\begin{equation} \label{interpol}
   \mu_t = (E_t)_\sharp \gamma^*,
\end{equation}
where $\gamma^*$ solves \eqref{MMOT-pb}, \eqref{splinecost} and $E_t(x_0,...,x_N)=x(t)$ is the value of the optimal path in \eqref{splinecost} at time $t$. This problem cannot be reduced to two-marginal problems and ours appears to be the first method which can accurately and efficiently solve the governing Kantorovich problem for general data.

\section{Discretization of the multi-marginal problem} \label{sec:discrete}

After discretization, marginals on a continuous state space (typically, a region $X$ of $\R^d$) become discrete probability measures on a finite set of gridpoints. For the multi-marginal problem \eqref{MMOT-pb}, we allow the set of discretization points for the $k$-th marginal to be $k$-dependent, 
$$
    X_k = \{ a_1^{(k)},...,a_{\ell_k}^{(k)}\},
$$
so the $k$-th marginal becomes a discrete probability measure on $X_k$. Multi-marginal plans then become discrete probability measures on the product grid $X_1\times ... \times X_N$. In the following, we identify these probability measures with their densities with respect to counting measure. That is, 
\begin{itemize}
\item the $k$-th marginal is a positive function $\mu_k$ on $X_k$ (or a positive vector in $\R^{\ell_k}$) with $\sum_{r_k\in X_k}\mu_k(r_k)=1$
\item transport plans are nonnegative functions $\gamma$ on $X_1\times ... \times X_N$ (or $\ell_1\times ... \times \ell_N$ tensors in $\R^{\ell_1+...+\ell_N}$) with $\sum_{r=(r_1,...,r_N)\in X_1\times ... \times X_N} \gamma(r)=1$
\item the cost $c$ also becomes a function on $X_1\times ... \times X_N$ (or an $\ell_1\times ... \times \ell_N$ tensor).
\end{itemize}
%


\noindent
The MMOT problem turns into the linear program
\begin{align} \label{discrete-MMOT-pb} \tag{MMOT}
    \operatorname{minimize} \quad&  \langle c, \gamma\rangle \mbox{ over }\gamma\, : \, X_1\times ... \times X_N\to\R \\
    \text{subject to}       \quad& M_k\gamma = \mu_k \quad \forall k = 1,...,N \notag\\
                            & \gamma \geq 0, \notag   
\end{align}
where
\begin{itemize}
    \item  $\langle c,\gamma\rangle := \sum_{\mathrm{r} \in X_1\times\dots\times X_N}c(\mathrm{r})\gamma(\mathrm{r})$ is the (Frobenius) inner product
    \item $M_k: \R^{\ell_1 \times \dots \times \ell_N} \to \R^{\ell_k}$ is the marginal operator:
    \[
      (M_k\gamma)(r_k) := \sum_{\substack{(r_1,...,r_{k-1},r_{k+1},...,r_N) \\ \in X_1\times ... \times X_{k-1} \times X_{k+1}\times ... \times X_N}} \gamma(r_1,\dots,r_N) \; \forall r_k\in X_k.
    \]
\end{itemize}
The set of transport plans satisfying the constraints in \eqref{discrete-MMOT-pb}, 
\begin{equation} \label{Kantpoly}
   \Pi(\mu_1,...,\mu_N) := \{\gamma \, : \, X_1\times ... \times X_N \to \R\, | \, M_k\gamma = \mu_k \forall k=1,...,N, \, \gamma\ge 0\},
\end{equation}
is a convex polytope known as the  Kantorovich polytope. 

%
%

%
%
\section{Sparsity} \label{sec:sparse}
Starting point of the algorithm introduced shortly is the fundamental fact that optimal transport problems admit sparse optimizers. The ancestor of all such results is the celebrated Brenier's theorem \cite{brenier1991polar} which states that solutions to the two-marginal problem with quadratic cost,
$$
\mbox{minimize }\int_{\R^d\times\R^d} |x-y|^2 d\gamma(x,y) \mbox{ subject to }\gamma\mapsto\mu_1,\mu_2
$$
with absolutely continuous first marginal $\mu_1$ are supported on the graph of a  map,
$$
  d\gamma(x,y)=d\mu_1(x)\delta_{T(x)}(y)\, dy \mbox{ for some }T\, : \, \R^d\to\R^d.
$$
Thus the support of the optimal plan, being contained in the set   $\{(x,y)\in\R^{2d}\, | \, y=T(x)\}$, is locally only $d$-dimensional instead of $2d$-dimensional. An analogous Monge ansatz for multi-marginal problems, 
\begin{equation} \label{MM-Monge}
  d\gamma(x_1,...,x_N)=d\mu_1(x_1)\delta_{T_2(x_1)}(x_2) \cdots \delta_{T_N(x_1)}(x_N) \, dx_2 ... dx_N,
\end{equation}
has been justified for certain special cost functions  \cite{GanSwi-98, agueh2011barycenters, ColEtAl-2015}. When valid, such results express an even more remarkable sparsity of optimizers than Brenier's theorem: optimizers are supported on the graph of the map $(T_2,...,T_N)\, : \, \R^d\to\R^{(N-1)d}$, yielding a local support dimension of $d$ instead of $Nd$. While the ansatz \eqref{MM-Monge} fails in simple counterexamples (see e.g. \cite{Fri-19,GerKauRaj-19}), in the case of {\it discrete and symmetric} multi-marginal problems a suitable sparse ansatz valid for general costs and marginals was found in \cite{friesecke2018breaking}. Here we extend this result to general multi-marginal problems, and provide a different proof which relies on convex geometry applied to the Kantorovich polytope instead of linear algebra applied to the constraint matrix. 
\begin{thm}\label{thm:SparseOptimizers}
Let $\mu_1,...\mu_N$ be probability vectors of length $\ell_k$ and let $C\in\R^{\ell_1\times ... \times \ell_N}$ be any cost tensor. Then the problem
\begin{equation} \label{apdx:MMOT}
\begin{aligned} 
				&\textit{minimize } \langle C, \gamma \rangle \mbox{ over }\gamma\in\R^{\ell_1}\times ... \times \R^{\ell_N} \\ &\textit{subject to } \gamma \in \Pi(\mu_1,\dots,\mu_N)
	\end{aligned}	
\end{equation} 
(see \eqref{Kantpoly}) possesses an optimizer $\gamma^\star$ with at most $\sum_{k=1}^N(\ell_k-1) + 1$ non-zero coefficients.
\end{thm}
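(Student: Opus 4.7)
The plan is to translate the combinatorial statement into a fact about vertices of the Kantorovich polytope, using two ingredients from LP theory: (i) a linear functional on a non-empty bounded polytope attains its minimum at a vertex, and (ii) a vertex of a polyhedron of the form $\{\gamma\ge 0 : A\gamma = b\}$ has support of size at most $\operatorname{rank}(A)$. The only real work is then to count the effective number of independent marginal constraints.

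First I would note that $\Pi(\mu_1,\dots,\mu_N)$ is non-empty (the product plan $\mu_1\otimes\cdots\otimes\mu_N$ lies in it) and bounded, since every $\gamma\in\Pi$ satisfies $0\le\gamma\le 1$ componentwise (its entries sum to $1$). Hence $\Pi$ is a compact convex polytope and the linear functional $\gamma\mapsto\langle C,\gamma\rangle$ attains its minimum at some vertex $\gamma^\star$. By the standard characterization of vertices of $\{\gamma\ge 0 : A\gamma = b\}$, the columns of $A$ indexed by $\operatorname{supp}(\gamma^\star)$ are linearly independent, so
\[
   |\operatorname{supp}(\gamma^\star)| \le \operatorname{rank}(A),
\]
where $A$ is the constraint matrix stacking the $N$ marginal operators $M_1,\dots,M_N$.

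The remaining step is the rank count. Stacking all marginal equations $M_k\gamma=\mu_k$ produces $\sum_{k=1}^N \ell_k$ scalar equations. For each fixed $k$, summing the $\ell_k$ rows of $M_k$ yields the same row, namely the all-ones vector on $X_1\times\cdots\times X_N$ (because marginalising and then summing equals summing all entries of $\gamma$), and the corresponding right-hand side is $\sum_{r_k}\mu_k(r_k)=1$. Thus among the $N$ blocks of equations, the ``total mass $=1$'' identity appears $N$ times, giving $N-1$ linear dependencies among the rows. Consequently
\[
   \operatorname{rank}(A)\le \sum_{k=1}^N \ell_k - (N-1) = \sum_{k=1}^N (\ell_k - 1) + 1,
\]
which combined with the previous inequality gives the claim.

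The step I expect to need the most care is the rank computation: one must check that the $N-1$ dependencies just identified are the only ones, or more conservatively observe that it suffices to produce an \emph{upper} bound on the rank, which the argument above does unconditionally. Everything else is textbook LP theory applied to the specific polytope $\Pi(\mu_1,\dots,\mu_N)$, and in particular the convex-geometric argument avoids any ad hoc manipulation of the (huge) constraint matrix, in line with the approach the authors advertise.
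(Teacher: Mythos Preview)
Your proof is correct, but it takes a different route from the paper's. The paper views the Kantorovich polytope as the intersection of the probability simplex $K=\calP_{\ell_1,\dots,\ell_N}$ with $n_*=\sum_k(\ell_k-1)$ hyperplanes (one per marginal equation, dropping one redundant equation per block) and then invokes Dubins' theorem: every extreme point of $L=K\cap H_1\cap\cdots\cap H_{n_*}$ is a convex combination of at most $n_*+1$ extreme points of $K$, i.e.\ of at most $n_*+1$ Diracs. You instead use the standard LP characterisation of vertices of $\{\gamma\ge 0:A\gamma=b\}$ as basic feasible solutions, bounding $|\operatorname{supp}(\gamma^\star)|$ by $\operatorname{rank}(A)$, and then bound the rank by exhibiting the $N-1$ row dependencies coming from the repeated total-mass identity.

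Both arguments hinge on the same constraint count; the difference is in the black box that converts the count into a sparsity bound. Your route is the more elementary, textbook-LP one and needs nothing beyond the basic feasible solution lemma; the paper's route is more geometric and in fact is precisely the alternative the authors advertise when they say they give ``a different proof which relies on convex geometry applied to the Kantorovich polytope instead of linear algebra applied to the constraint matrix.'' So your closing remark that your argument is ``in line with the approach the authors advertise'' is slightly off: you have reproduced exactly the linear-algebraic argument they are contrasting themselves with. Mathematically, though, nothing is missing---your observation that only an \emph{upper} bound on $\operatorname{rank}(A)$ is required is exactly right and closes the argument cleanly.
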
 
Taking a convex-geometric viewpoint, this theorem can be seen to be a direct consequence of the following general result by Dubins \cite{dubins-62}: 
\begin{thm*}[Dubins]
	Let $L$ be the intersection of a closed and bounded convex set $K\subset\R^d$ with $n$ hyperplanes. Then every extreme point of $L$ is a convex combination of at most $n + 1$ extreme points of $K$.
\end{thm*}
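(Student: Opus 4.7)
My plan is to prove Dubins' theorem directly, avoiding induction on $n$, by combining Minkowski's theorem with a dimension count on the linear constraints that cut out $L$.

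First I would fix an extreme point $x$ of $L$ and, using Minkowski's theorem (every point of a compact convex subset of $\R^d$ is a finite convex combination of extreme points), write
\[
  x \;=\; \sum_{i=1}^m \lambda_i e_i, \qquad \lambda_i>0, \ \sum_i \lambda_i =1,
\]
where the $e_i$ are extreme points of $K$ and $m$ is chosen \emph{minimal}. I would then encode the $n$ hyperplane constraints $\langle a_k, x\rangle = b_k$ together with the normalization $\sum_i\lambda_i=1$ as a single linear system $M\lambda = b$, where $M$ is the $(n+1)\times m$ matrix whose $k$-th row for $k\le n$ is $(\langle a_k,e_1\rangle,\dots,\langle a_k,e_m\rangle)$ and whose final row is $(1,\dots,1)$. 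The desired bound $m\le n+1$ becomes the statement that $M$ has full column rank.

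Assuming for contradiction that $m\ge n+2$, the kernel of $M$ is nontrivial, so I pick some $\mu\in\ker M\setminus\{0\}$ and consider the perturbation $\lambda + t\mu$. By construction the perturbed weights still sum to $1$ and still satisfy $\langle a_k,\sum_i(\lambda_i+t\mu_i)e_i\rangle = b_k$, so whenever $\lambda+t\mu\ge 0$ componentwise the point $x_t := \sum_i(\lambda_i+t\mu_i)e_i$ lies in $L$.

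The step I expect to be the main obstacle is a clean case distinction on whether this perturbation actually moves $x$ in $\R^d$. If $v := \sum_i\mu_i e_i \ne 0$, then $x_t = x + tv$ traces a nontrivial line segment in $L$ through $x$ for small $|t|$ (since $\lambda > 0$ permits small signed perturbations while staying strictly positive), violating extremality of $x$ in $L$. If instead $v=0$, then $\sum_i\mu_i e_i=0$ with $\sum_i\mu_i=0$ (from the final row of $M$) is a nontrivial affine dependence among the $e_i$; taking the largest $t>0$ for which $\lambda+t\mu\ge 0$ componentwise drives at least one coefficient to zero — this uses that $\mu\ne 0$ and $\sum_i\mu_i=0$ force $\mu$ to have a strictly negative entry — while preserving both $\sum_i(\lambda_i+t\mu_i)=1$ and $\sum_i(\lambda_i+t\mu_i)e_i = x$, exhibiting $x$ as a convex combination of strictly fewer than $m$ extreme points of $K$ and contradicting the minimality of $m$. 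Both branches yielding a contradiction forces $m\le n+1$, which is the claim.
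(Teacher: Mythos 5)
Your proof is correct, and it takes a genuinely different route from the paper's. The paper restricts to the case where $K$ is a polytope and argues via the face lattice: for an extreme point $p$ of $L$ one takes the minimal face $F$ of $K$ containing $p$ (so $p$ is in the relative interior of $F$), observes that intersecting with $n$ hyperplanes can drop the local dimension by at most $n$, hence $\dim F \le n$, and then invokes Carath\'eodory on the face $F$. Your argument instead starts from Minkowski's theorem (any point of a compact convex set in $\R^d$ is a finite convex combination of extreme points), takes a minimal-length representation $x=\sum_{i=1}^m\lambda_i e_i$, and runs a perturbation/exchange argument: a nonzero element $\mu$ of the kernel of the $(n+1)\times m$ constraint matrix either moves $x$ inside $L$ (contradicting extremality of $x$ in $L$, when $\sum_i\mu_i e_i\neq 0$) or lets you push a coefficient to zero while fixing $x$ (contradicting minimality of $m$, when $\sum_i\mu_i e_i=0$). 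Two points worth noting. First, your proof is \emph{more general} than the paper's: it does not need $K$ to be a polytope and thus actually proves the theorem in the generality in which it is stated (Dubins' original setting of compact convex $K$), whereas the paper explicitly retreats to polytopes because the general case is harder with their tools. Second, your approach is essentially self-contained linear algebra plus Minkowski, avoiding the face-lattice machinery entirely; the paper's version is shorter once you accept facts about faces of polytopes, but yours buys generality at little extra cost. One small stylistic remark: full column rank of $M$ is not literally what you prove and is not needed; you only need that $m\le n+1$, which your kernel argument delivers directly (and which is equivalent to the number of rows being at least the number of columns given that you show the kernel is trivial for minimal $m$).
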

(Recall that an extreme point of a convex set is any point in the set  that cannot be written as a convex combination of two other points in the set.) The proof of this theorem is somewhat complicated, owing to the fact that it treats general compact convex sets. Hence in an appendix we supply a simple proof in the special case of convex polytopes, which is sufficient for our purposes. 

\begin{proof}[Proof of Theorem \ref{thm:SparseOptimizers}] 1. We begin by counting the number of constraints which the marginal conditions impose on probability measures in the set
$$
 \calP_{\ell_1,...,\ell_N}=\{\gamma\in\R^{\ell_1\times ...\ell_N}\, | \, \gamma\ge 0, \, \sum_{i_1,...,i_N}\gamma_{i_1...i_N} = 1\}.
$$
Each marginal condition $M_k\gamma=\mu_k$ imposes $\ell_k-1$ conditions, because it corresponds to $\ell_k$ linear equations one of which is redundant due to the normalization condition  $\sum_{i_1,...,i_N}\gamma_{i_1...i_N}=1$. Hence in total there are $n_*:=\sum_{k=1}^{N} (\ell_k-1)$ constraints. 

2. We now apply Dubins' theorem to the Kantorovich polytope $L=\Pi(\mu_1,...,\mu_N)$, taking 
$K$ to be the probability simplex $\calP_{\ell_1,...,\ell_N}$. Thus $L$ is obtained from $K$ by $n_*$ constraints, each of which corresponds geometrically to intersecting $K$ with some hyperplane. It is obvious that the extreme points of $K$ are precisely the tensors with only 1 nonzero component, corresonding to the Dirac measures on $X_1\times ...\times X_N$. By Dubins' theorem, the extreme points of $L$ are convex combinations of at most $n_*+1$ extreme points of $K$, and thus contain at most $n_*+1$ nonzero components.

3. The assertion now follows from the general fact that every linear function on a compact convex set attains its minimum at some extreme point of the set.
\end{proof}
\section{The GenCol algorithm}
Existing computational methods have their merits, but suffer from the curse of dimension when applied to high-dimensional multi-marginal problems. In particular, the widely used Sinkhorn algorithm abandons the sparsity of exact solutions from Theorem \ref{thm:SparseOptimizers} by smoothing out their support, the smoothing being essential for algorithm convergence. 

Recently, in the special case of symmetric problems arising in electronic structure an algorithm called GenCol was introduced \cite{FriSchVoe-21} which instead {\it preserves} this sparsity.  This makes the algorithm applicable to very high-dimensional problems where smoothed-out plans can no longer be stored or efficiently manipulated. Here we show that the symmetric structure exploited in \cite{FriSchVoe-21} is in fact not needed and the algorithm can be extended to general multi-marginal optimal transport problems.

The algorithm alternates between solving the MMOT problem on a small reduced set $\Omega\subset X_1\times ... \times X_N$ of configurations with $|\Omega|=O(\ell_1+...+\ell_N)$, and updating the reduced set $\Omega$ based on the (primal and dual) MMOT solution until the cost has converged. Crucially, by Theorem \ref{thm:SparseOptimizers} the above size of $\Omega$ is sufficient to solve the MMOT problem exactly. Let us describe each step in turn. At the end of the section we will comment on differences to the symmetric case. \vspace*{2mm}

{\bf The reduced MMOT problem.} 
In the contexts of the general MMOT problem \eqref{discrete-MMOT-pb}, the {\it reduced MMOT problem}  on $\Omega\subset X_1\times ... \times X_N$ is the following:  
\begin{align}\tag{RMMOT}\label{eq:RMMOT}
\begin{split}
    \operatorname{minimize} \quad& \langle c, \gamma\rangle_\Omega \mbox{ over }\gamma\, : \,\Omega\to\R \\
    \text{subject to}       \quad& M_k\gamma = \mu_k  \text{ for all } k=1,...,N, \\
                            & \gamma \geq 0.   
\end{split}
\end{align}
Here $\langle c,\gamma \rangle_\Omega = \sum_{r \in \Omega} c(r)\gamma(r)$ is the (Frobenius) inner product on $\Omega$. As opposed to \eqref{discrete-MMOT-pb}, this is just a small LP; the constraint matrix is of size $(\ell_1+...+\ell_N)\times|\Omega|$. 

The subset $\Omega\subset X_1\times ... \times X_N$ must be such that the constraints are feasible, so that the reduced problem possesses a solution. As we shall see, the updating rule for $\Omega$ automatically preserves this property. It remains to find a feasible initial choice. This problem can be completely solved by using a multi-marginal generalization of the north-west corner rule, as shown in the next section.  
In the following, an important role will be played by the dual of \eqref{eq:RMMOT}, which is
\begin{align}\tag{DRMMOT}\label{eq:DRMMOT}
\begin{split}
    \operatorname{maximize} \quad&  \sum_{i=1}^N\langle p_i, u_i\rangle =: J(u) \mbox{ over }u=(u_1,...,u_N), \, u_i \, : \, X_i\to\R \\
    \text{subject to}       \quad& \sum_{i=1}^N u_i(r_i) \leq c(r) \text{ for all }r \in \Omega .
\end{split}
\end{align}
The solutions $u_1,...,u_N$ are called Kantorovich potentials. 
By LP duality, $\max_u J(u)=\min_\gamma \langle c,\gamma\rangle_\Omega$. 
\vspace*{2mm}

{\bf Updating the reduced configuration set.} The primal problem \eqref{eq:RMMOT} lives on a reduced domain $\Omega\subset X_1\times ... \times X_N$ but considers all constraints. By contrast, the dual problem, \eqref{eq:DRMMOT}, lives on the full domain (i.e. the $u_i$ are functions on $X_i$) but only considers a reduced set of constraints. Starting point for the updating rule is the following well known lemma from discrete optimization which we translate here into the context and language of MMOT.

\begin{lma} \label{lem:justify_update}
    Let $\gamma^*$, $u^*=(u_1^*,...,u_N^*)$ be optimal solutions for the reduced problems \eqref{eq:RMMOT}, \eqref{eq:DRMMOT}. If $u^*$ is feasible for the dual of the full MMOT problem (eq.~\eqref{eq:DRMMOT} with $\Omega$ replaced by $X_1\times ...\times X_N$), i.e. satisfies 
    $$ \sum_{i=1}^Nu_i^\star(r_i) - c(r) \leq 0 \text{ for all } r \in X_1\times\dots\times X_N, $$
    then  
    $$\bar \gamma^\star(r) := \begin{cases} \gamma^\star(r) &  r\in \Omega,\\ 0 & r \not\in \Omega \end{cases}$$ 
    is optimal for the full MMOT problem \eqref{discrete-MMOT-pb}.
\end{lma}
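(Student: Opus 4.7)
The plan is to combine LP duality for the reduced problem with weak duality for the full problem, using the hypothesis precisely to bridge the two. Here is how I would organize the argument.

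First, I would verify that $\bar\gamma^\star$ is feasible for the full MMOT problem \eqref{discrete-MMOT-pb}. Nonnegativity is immediate. For the marginal constraints, note that for every $r_k\in X_k$,
\[
(M_k\bar\gamma^\star)(r_k)=\sum_{(r_1,\dots,r_{k-1},r_{k+1},\dots,r_N)} \bar\gamma^\star(r_1,\dots,r_N)=\sum_{r\in\Omega,\ \pi_k(r)=r_k}\gamma^\star(r)=(M_k\gamma^\star)(r_k)=\mu_k(r_k),
\]
since zero-extension outside $\Omega$ adds no mass. The same reasoning gives $\langle c,\bar\gamma^\star\rangle=\langle c,\gamma^\star\rangle_\Omega$.

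Second, I would invoke strong LP duality for the reduced pair \eqref{eq:RMMOT}, \eqref{eq:DRMMOT}, which yields
\[
\langle c,\gamma^\star\rangle_\Omega = J(u^\star)=\sum_{i=1}^N\langle\mu_i,u_i^\star\rangle.
\]
Third, I would use the hypothesis that $u^\star$ is feasible for the \emph{full} dual together with weak duality for the full MMOT: for any $\gamma\in\Pi(\mu_1,\dots,\mu_N)$,
\[
\langle c,\gamma\rangle \ge \sum_{r\in X_1\times\dots\times X_N}\Big(\sum_{i=1}^N u_i^\star(r_i)\Big)\gamma(r)=\sum_{i=1}^N\langle u_i^\star,M_i\gamma\rangle=\sum_{i=1}^N\langle u_i^\star,\mu_i\rangle=J(u^\star),
\]
where the first inequality uses $\sum_i u_i^\star(r_i)\le c(r)$ pointwise on $X_1\times\dots\times X_N$ and $\gamma\ge 0$.

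Chaining these together gives, for every $\gamma$ feasible for the full MMOT,
\[
\langle c,\gamma\rangle \ge J(u^\star)=\langle c,\gamma^\star\rangle_\Omega=\langle c,\bar\gamma^\star\rangle,
\]
so $\bar\gamma^\star$, which is itself feasible, attains the minimum. There is really no serious obstacle here; the only point that requires care is the interchange in the weak-duality computation, which is elementary because all sums are finite and the marginal constraints are used explicitly.
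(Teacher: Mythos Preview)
Your proof is correct and follows essentially the same route as the paper's: use strong LP duality on the reduced problem to equate $\langle c,\bar\gamma^\star\rangle$ with $J(u^\star)$, then use the hypothesis that $u^\star$ is feasible for the full dual together with (weak) duality for the full problem to conclude optimality. The only cosmetic difference is that the paper invokes strong LP duality for the full problem and the inclusion $\Omega\subset X_1\times\dots\times X_N$ where you spell out weak duality directly; your explicit verification of feasibility of $\bar\gamma^\star$ is a nice addition.
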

A short proof is included at the end of this section. Now suppose $\gamma$ and $u$ are solutions to \eqref{eq:RMMOT}, \eqref{eq:DRMMOT}. The lemma says that unless the optimizer $\gamma$ of the reduced problem (extended by zero) is already optimal for the full problem, there exists some configuration $r'\in X\backslash\Omega$ where the optimizer $u$ of the dual reduced problem \eqref{eq:DRMMOT} violates the dual constraint, i.e. satisfies  
\begin{equation} \label{acc_crit}
    \sum_{i=1}^N u_i(r_i') - c(r') > 0.
\end{equation}
This fact is exploited by the updating rule for $\Omega$, which goes as follows: 
\begin{itemize}
\item[--] pick a random ``parent'' configuration $r$ with $\gamma(r)>0$ (i.e. $r\in\supp\gamma$) 
\item[--] pick a random ``child'' $r'$ differing from $r$ by only one entry \\(i.e. $r_i'=r_i$ for $i\in\{1,...,N\}\backslash\{k\}$ for some $k$)
\item[--] add $r'$ to $\Omega$ provided the corresponding dual constraint is violated \\
(i.e. \eqref{acc_crit} holds).
\end{itemize}
These steps are repeated until a configuration $r'$ satisfying the acceptance criterion \eqref{acc_crit} has been found. When no such configurations have been found in sufficiently many steps, Lemma \ref{lem:justify_update} suggests that the current solution is optimal and the algorithm is terminated.

As already emphasized in the symmetric context in \cite{FriSchVoe-21}, the genetic learning aspect of the updating rule for $\Omega$ is that only ``successful'' configurations (i.e. ones where the current optimal plan is positive) are allowed to bear offspring. Moreover the dual state $u$ acts as ``dual critic'' of any proposed new configuration, accepting it only if it adds a new constraint which cuts off the current dual solution from the set of admissible potentials, in loose similarity to Wasserstein GANs \cite{arjovsky2017wasserstein}. Note also that the acceptance criterion is very cheap to check numerically, corresponding to just evaluating the cost and the dual solution at a single configuration. 

The updating step for $\Omega$ is completed by a tail-clearing procedure: one removes the oldest ``inactive'' configurations (i.e. $r\in\Omega\backslash\supp\gamma$) whenever $|\Omega|$ exceeds the a maximum allowed size 
\begin{equation} \label{maxsize}
   \beta \cdot(\ell_1+...+\ell_N).
\end{equation}
Here $\beta>1$ is a hyperparameter, chosen to be $3$ in all our numerical examples. The choice $\beta=1$ would be sufficient for exactness of the method (see Theorem \ref{thm:SparseOptimizers}), but not for efficient genetic learning. 
\vspace*{2mm}

\begin{figure}[http!]
    \centering
    \includegraphics[width=0.5\textwidth]{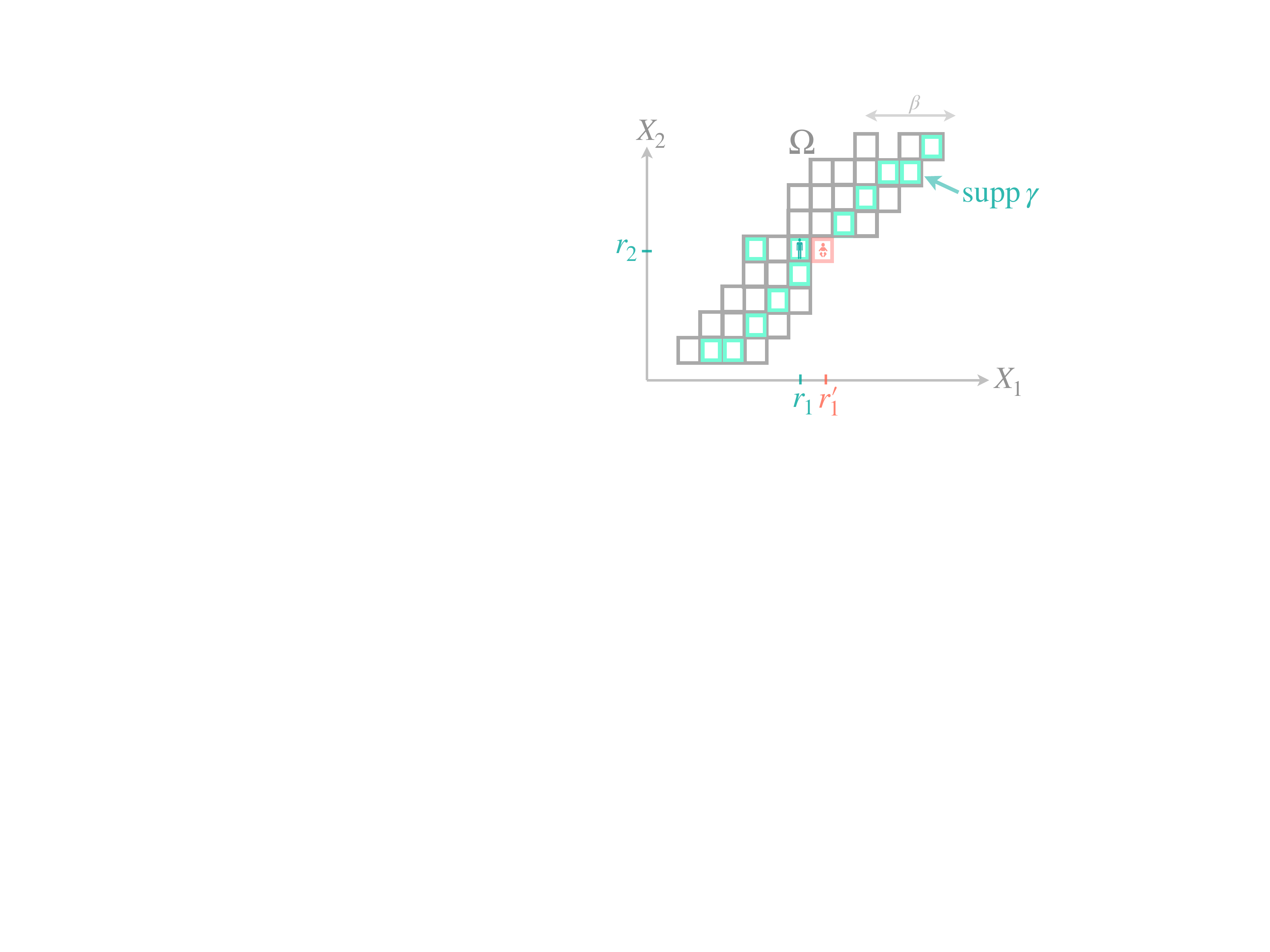} \vspace*{2mm}
    \begin{tikzpicture}[node distance=1.4cm ]
\scriptsize
\node (A) [block] {	Solve MMOT problem on $\Omega$\\
			$\gamma : \Omega \to [0,\infty)$ primal solution (Kant. plan)\\
			$u_i: X_i \to \mathbb{R}$ dual solutions (Kantorovich potentials)};
\node (B) [block, right=of A] { 	Pick random parent configuration $r=(r_1,...,r_N) \in \Omega$\\ with $\gamma(r) > 0$; create child $r'$ by\\ changing one entry of the configuration };
\node (C) [block, below=of B, yshift=0.4cm] { 	Accept child $r' \iff \sum\limits_{i=1}^Nu_i(r'_i) - c(r') > 0$ };
\node (D) [block, below=of A, yshift=0.4cm] {Update $\Omega^{new} = \Omega \cup \{r'\}$.\\ If $|\Omega^{new}| > \beta\cdot (\ell_1+...+\ell_N)$, remove oldest unused\\ configurations}; 
\node (E) [above= of A, yshift=-0.4cm] {Initialize reduced configuration set $\Omega$};

\draw [arrow] (A)--(B);
\draw [arrow] (B.south)++(-0.8,0)-- ++(0,-1);
\draw [arrow] (C.north)++(0.8,0) -- node[right] {if FALSE} ++(0,1);
\draw [arrow] (C)-- node[below] {if TRUE}(D);
\draw [arrow] (D)--(A);
\draw [arrow] (E)--(A);


\end{tikzpicture}
    \caption{Schematic description of the  GenCol algorithm. A single iteration solves the MMOT problem on a {\it reduced configuration set} $\Omega$ (gray squares), and then updates $\Omega$ based on the primal and dual MMOT solution. The updating rule for $\Omega$ is genetic: any proposed new configuration $(r_1',...,r_N')$ (red square) differs only in one entry from a `successful' configuration, i.e. one in the support of the optimal plan (green squares). The acceptance criterion is equivalent to the current dual solution no longer being a solution on $\Omega\cup\{r'\}$ (see text). The picture corresponds to the two-marginal case ($N\! =\! 2$).} 
    \label{fig:SchematicDescription}
\end{figure}    

For a summary of the algorithm in a flowchart respectively in pseudocode see Figure \ref{fig:SchematicDescription} and Algorithm \ref{algo:GenCol}.

\begin{algorithm}[http!]
\caption{Genetic Column Generation}	
\label{algo:GenCol}
\begin{algorithmic}[1]
\Require cost function $c$, marginals $\mu_k$, feasible reduced configuration set $\Omega$, hyperparameter $\beta \ge 2$
\While{not converged}
	\State $(\gamma,u) \gets$ solve \eqref{eq:RMMOT}, \eqref{eq:DRMMOT}  
	\Repeat
		\State Draw ``parent'' $r\in\supp\gamma$  
		\State Draw ``child'' $r' \in X_1\times ...\times X_N\backslash\Omega$ with $r'_i=r_i$ for all but one $i\in\{1,...,N\}$
	\Until{$\sum_iu_i(r'_i) - c(r') > 0$}
	\State Add new configuration $\Omega^{new} \gets \Omega \cup \{r'\}$
	\If{$|\Omega| > \beta \cdot (\ell_1+...+\ell_N)$} 
		\State remove oldest $\ell_1+...+\ell_N$ ``inactive'' configurations from $\Omega\backslash \supp \gamma $
	\EndIf
\EndWhile
\end{algorithmic}
\end{algorithm}

\vspace*{1mm}

{\bf Comparison with classical column generation.} Just like genetic column generation, classical column generation (translated into our context) alternates between solving the reduced problems \eqref{eq:RMMOT}--\eqref{eq:DRMMOT} and updating the set $\Omega$. (The name of the method comes from thinking about abstract linear programming in terms of the constraint matrix in the primal problem; adding a new configuration $r'$ to the reduced configuration set $\Omega$ in MMOT corresponds to adding a new column to the constraint matrix of the LP.) But the updating step is different: one adds the configuration $r'$ given by
\begin{equation} \label{pricing}
  r' = \underset{r}{\rm argmax} \Bigl( \sum_{i=1}^N u(r_i) - c(r)\Bigr).
\end{equation}
This problem is known as the pricing problem.
Moreover no tail-clearing is carried out, leading to possibly unresticted growth of the size of $\Omega$. 

Numerical tests and theoretical considerations show that the rather different (genetic) update in GenCol is essential for overcoming the curse of dimension. The classical rule \eqref{pricing}, or an unbiased random search for new configurations, or an unbiased random search  in a neighbhourhood of $\Omega$ would just turn the curse of dimension with respect to the state space into a curse of dimension with respect to the number of search steps. In fact, as shown in \cite{FriSchVoe-21}, even for symmetric MMOT with pairwise cost the pricing problem is NP-complete. 
\vspace*{2mm}

{\bf Comparison with the symmetric case.}  In symmetric MMOT, $X_1=...=X_N=:X$ (equal marginal spaces) and hence  $\ell_1=...=\ell_N=:\ell$, and $\mu_1=...=\mu_N=:\mu$ (equal marginals). The optimization in \eqref{discrete-MMOT-pb} is restricted to symmetric $\gamma \, : \, X^N\to\R$, i.e. $\gamma(r_{\sigma(1)},...,r_{\sigma(N)})=\gamma(r_1,...,r_N)$ for all permutations $\sigma$. Hence the marginal conditions can be reduced to a single one, $M_1\gamma=\mu$. Consequently exact optimizers $\gamma$ exist with even more sparsity by a factor $N$, where sparsity means number of nonzero coefficients of $\gamma$ with respect to the natural basis consisting of symmetrized Diracs. The latter are in one-to-one correspondence to the $N$-point configurations in the sector $(X^N)_{sym}=\{(a_{i_1},...,a_{i_N})\in X^N \, : \, 1\le i_1\le ... \le i_N\le \ell\}$, the reduced configuration set $\Omega$ becomes a subset of $X_{sym}$, and the maximum allowed size of $\Omega$ can be reduced by a factor $N$ from \eqref{maxsize} to 
$$
   \beta\cdot \ell.
$$
This makes GenCol even more efficient in the symmetric case. 
Another difference is that initialization of $\Omega$ is trivial in the symmetric case: the plan $\gamma_0=T_\sharp\mu$ with $T$ being the diagonal map $x\mapsto (x,...,x)$ from $X$ to $X^N$ is clearly feasible and its support (augmented by random configurations in $X_{sym}$) provides a feasible initial reduced configuration set.

\begin{proof}[Proof of Lemma \ref{lem:justify_update}]
    We have 
    \begin{align*}
         \langle c,\bar \gamma^\star \rangle &= \langle c, \gamma^\star \rangle_\Omega = \sum_{i=1}^N\langle p,u_i^\star \rangle
        = \max_{u}\left\{\sum_i\langle p_i,u_i\rangle : \sum_i u_i(r_i)\leq c(r) \forall r \in \Omega\right\} \\
        & \geq \max_{u}\left\{\sum_i\langle p_i,u_i \rangle : \sum_i u_i(r_i)\leq c(r) \forall r \in X_1\times\dots\times X_N\right\} \\
        & = \min_{\gamma}\Big\{ \langle c,\gamma\rangle: M_k\gamma=p_k \,\forall k, \, \gamma\ge 0  \Big\}.
    \end{align*}
    The first equality holds true because only zeros were added. The second and last equality hold true due to LP-duality. The third follows from the definition of $u^*$ and the inequality is a trivial consequence of  $\Omega \subset X_1\times \dots\times X_N$. But if $ \sum_{i=1}^Nu_i^\star(r_i) - c(r) \leq 0 \text{ for all } r \in X_1\times \dots\times X_N$, the inequality is clearly an equality and so $\bar\gamma^\star$ is a minimzer of the full MMOT problem.
\end{proof}
\subsection{Sparse initialization and multi-marginal north-west corner rule}
The GenCol algorithm requires one to find an initial sparse subset $\Omega \subset X_1 \times \dots X_N$ such that the reduced MMOT problem is feasible; the required size constraint is
\begin{equation} \label{sizeOmega}
   |\Omega| \le \beta \cdot (\ell_1+...+\ell_N).
\end{equation}
Here we introduce a simple and general method to generate such a set $\Omega$, and prove that it yields an initial feasible set $\Omega$ with less than $\sum_{k=1}^N \ell_k$ elements, hence fulfilling \eqref{sizeOmega} regardless of the choice of the hyperparameter $\beta>1$. (The set can then be augmented by arbitrary -- say, random -- configurations to saturate the bound \eqref{sizeOmega}.)  

Our method can be viewed as a generalization of the north-west corner rule to the multi-marginal case, or alternatively as an iterated monotone rearrangement (loosely similar to Knothe's transport). 

We begin by introducing an ordering of $X_k$,  $a_1^{(k)} < \dots < a_{\ell_k}^{(k)}$, to which we refer in terms of \textit{monotone} rearrangement. We start with an empty set $\Omega$ and add a first configuration $r = ( a_1^{(1)}, \dots, a_1^{(N)})$ and assign as much mass to this configuration as possible, given by $\gamma(r) = \min\{\mu_1(a_1^{(1)}), \dots, \mu_N(a_1^{(N)})\}$. For the next configuration we need to keep track of the remaining mass not allocated on all currently considered locations $a_1^{(k)}$. Therefore, we introduce
\[ b_k = a_1^{(k)} - \gamma(r). \]
There exists at least one $k \in \{1,\dots,N\}$ such that $b_k = 0$. This means that all mass at $a_1^{(k)}$ is assigned to the configuration $\gamma(r)$ and therefore we have to move to the next location $a_2^{(k)}$ and fill up $b_k$ with $\mu_k(a_2^{(k)})$. This leads to the next configuration $r = (a_i^{(1)}, \dots,a_i^{(N)})$, which is again added to the set $\Omega$. Then again, we assign as much mass as possible to it, from now on via 
\[ \gamma(r) = \min\{b_1,\dots,b_N\}, \]
compute the remaining mass, $b_k^{new} = b_k - \gamma(r)$, and update all locations $a_i^{(k)}$ in the current configuration $r$ for which $b_k =0$ to the next location $a_{i+1}^{(k)}$.
	
Iteratively, the generated configurations yield a sparse initial subset $\Omega$. The procedure is completely described in algorithm \ref{algo:InitOmega}.

\noindent
\begin{minipage}{0.5\textwidth}
\begin{algorithm}[H]
\caption{Multi-marginal north-west corner rule}	
\label{algo:InitOmega}
\begin{algorithmic}[1]
  \Require marginals $\mu_1,\dots,\mu_N$
  \State $\Omega = \emptyset$
  \State $r \gets (a_1^{(1)},\dots,a_1^{(N)})$
  \State $b \gets (\mu_1(r_1),\dots,\mu_N(r_N))$
  \While{$\exists k : r_k < a^{(k)}_{\ell_k}$}
    \State $\Omega \gets \Omega \cup \{r\}$
    \State $\gamma(r) \gets \min\{b_1,\dots,b_N\}$
    \For{k = 1:N}
      \State $b_k \gets b_k - \gamma(r)$
      \If{$b_k = 0$}
        \State $r_k \gets \min\{a_i^{(k)} > r_k\}$
        \State $b_k \gets \mu_k(r_k)$
      \EndIf
    \EndFor
  \EndWhile
  \State \Return Feasible set $\Omega$ and plan $\gamma$ 
\end{algorithmic}
\end{algorithm}    
\end{minipage}\hfill
\begin{minipage}{0.5\textwidth}
\begin{figure}[H]
    \includegraphics[width = \textwidth]{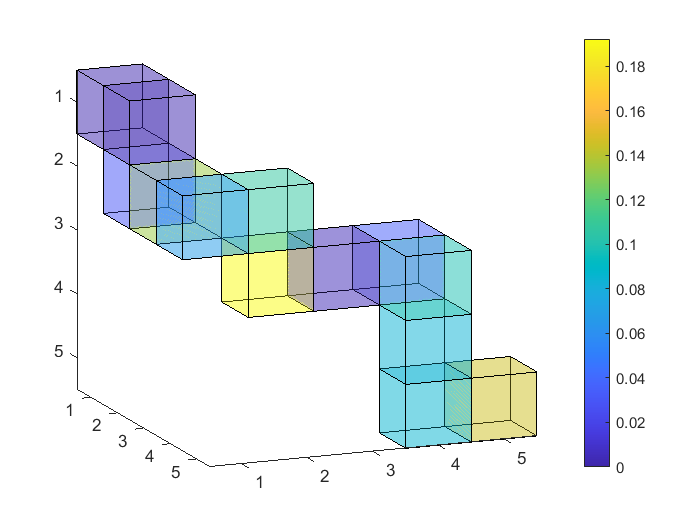}
    \caption{The NW corner rule was applied to find a feasible plan $\gamma$ for 3 marginals. $\Omega$ is its support.}
\end{figure}
\end{minipage}\\ [\baselineskip]

It is obvious that our algorithm yields a set $\Omega$ with less than $\sum_{k=1}^N \ell_k$ elements, because this number equals the total number of mass points contained in all the marginals and each new configuration $(r_1,...,r_N)$ added to $\Omega$ accounts completely for the remaining marginal mass at one of the $r_i$. A more precise count shows that the algorithm in fact exactly yields the amount of sparsity found in Theorem \ref{thm:SparseOptimizers}.

\begin{prp}
 Algorithm \ref{algo:InitOmega} yields a subset $\Omega \subset X_1 \times \dots \times X_N$ on which the multi-marginal problem \eqref{discrete-MMOT-pb} is feasible. The size of $\Omega$ is less or equal to $\sum_{k=1}^N (\ell_k-1) + 1 $.
\end{prp}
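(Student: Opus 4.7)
The plan is to prove the two assertions of the proposition separately: feasibility of the reduced MMOT problem on $\Omega$, i.e.\ $\gamma\in\Pi(\mu_1,\ldots,\mu_N)$, and the size bound $|\Omega|\le\sum_{k=1}^N(\ell_k-1)+1$.

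For \textbf{feasibility}, the key invariant I would maintain during the run of Algorithm~\ref{algo:InitOmega} is that, at the start of every iteration,
\[
 b_k \;=\; \mu_k(r_k) \;-\; \sum_{\substack{\tilde r \in \Omega \\ \tilde r_k = r_k}} \gamma(\tilde r),
\]
i.e.\ $b_k$ tracks the mass of $\mu_k$ at the current pointer value $r_k$ that has not yet been allocated. This is preserved by the update rules: each iteration decreases $b_k$ by the newly assigned $\gamma(r)=\min_j b_j\ge 0$, and when $b_k$ hits $0$ the pointer $r_k$ is advanced to the strictly larger next element of $X_k$ and $b_k$ is refreshed to $\mu_k(r_k^{\mathrm{new}})$. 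Since $r_k$ visits each value $a_i^{(k)}$ contiguously and leaves it only once the mass allocated while sitting there equals $\mu_k(a_i^{(k)})$ exactly, I would conclude $(M_k\gamma)(a_i^{(k)})=\sum_{r\in\Omega,\,r_k=a_i^{(k)}}\gamma(r)=\mu_k(a_i^{(k)})$ for each $i$ and $k$. Nonnegativity of $\gamma$ is immediate from $\gamma(r)=\min_j b_j\ge 0$, so $\gamma\in\Pi(\mu_1,\ldots,\mu_N)$ and the reduced problem on $\Omega$ is feasible.

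For the \textbf{size bound}, I would count the iterations of the while loop, each of which contributes exactly one new element to $\Omega$. In every iteration one subtracts $\gamma(r)=\min_j b_j>0$ from each $b_j$, so at least one coordinate $k^*$ has $b_{k^*}=0$ afterwards; unless $r_{k^*}$ is already at the terminal point $a_{\ell_{k^*}}^{(k^*)}$ (which can happen only in the final iteration, where all remaining mass has been distributed), the pointer $r_{k^*}$ then strictly advances within $X_{k^*}$. Thus any two successive configurations added to $\Omega$ differ by at least one strict monotone advance of some $r_k$. Since $r_k$ ranges over $X_k$ monotonically, it can change value at most $\ell_k-1$ times during the entire run. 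Summing over $k$ gives at most $\sum_{k=1}^N(\ell_k-1)$ transitions between successive configurations, and therefore $|\Omega|\le\sum_{k=1}^N(\ell_k-1)+1$.

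The hard part is not conceptual but rather the bookkeeping around termination: one must verify that the algorithm halts, that every iteration outputs a genuinely new configuration (so the iteration count equals $|\Omega|$), and that in the final iteration, where several $b_k$ may hit $0$ simultaneously and some pointers may already be at the terminal element, the mass balance is consistent — which ultimately reduces to the normalization $\sum_i\mu_k(a_i^{(k)})=1$ for each $k$. Once these details are handled, the monotone-advance count yields the claimed bound, which pleasingly matches the intrinsic sparsity of Theorem~\ref{thm:SparseOptimizers}.
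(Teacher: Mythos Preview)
Your proposal is correct and follows essentially the same approach as the paper: feasibility via the invariant that $b_k$ tracks the unallocated mass at the current pointer position, and the size bound via counting monotone pointer advances (at most $\ell_k-1$ per coordinate). Your version is in fact more explicit about the invariant and the termination bookkeeping than the paper's own proof, but the underlying argument is identical.
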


\begin{proof}
    Since $\gamma(r) =  \min\{b_1,\dots,b_N\}$, $b_k - \gamma(r) \geq 0$ for all $k = 1,...,N$ and there exists at least one $k$ such that equality holds. This ensures that at least one entry of the configuration $r$ is updated monotonically to the next element in $X_k$. Because we start with configuration $(a_1^{(1)},\dots,a_1^{(N)})$, every element can be updates at most $(\ell_k-1)$-times. Because we update the elements monotonically, this leads to at most $\sum_{k=1}^N (\ell_k -1) + 1$ configurations (if and only if exactly one element is updated in every iteration, e.g. for generic marginals).
    By construction, $b_k \gets \mu_k(r_k)$ and $b_k \gets b_k - \gamma(r)$ until $b_k = 0$, so the marginal constraints are fulfilled (i.e. $M_k\gamma = \mu_k$) and all mass will be distributed because $\int_{X_k}d\mu_k = 1$ for all $k =1,...N$.
\end{proof}

\section{Numerical results}
We implemented the algorithm in Matlab, where we used the Mosek toolbox \cite{mosek} because of its efficient LP solver. Moreover, Mosek -- unlike Matlab's inbuilt {\tt linprog} -- allows a hot start of the simplex algorithm in each interation, using the previous solution as initial state. Since the LP changes only slightly in each iteration as only one new variable is added, this leads to a further significant speed-up. 

We present four examples. The first is explanatory and illustrates how the algorithm works on a small two-marginal  example. In this case it would of course not be necessary to use a method that enforces temporary plans to remain sparse, but nevertheless the algorithm works just fine, converging exponentially at a steady rate to the exact solution. Our other three examples are large: we compute Wasserstein barycenters of MNIST digits (representing accurate summaries of these digits), weighted Wasserstein barycenters of shapes (representing morphed shapes), and cubic spline interpolations in Wasserstein space. All examples were simulated in Matlab R2022a.

\subsection{Explanatory Example} \label{sec:small}
To illustrate the way the algorithm works we start with a small example (which, of course, could be solved with many other methods not requiring sparsity of plans in each update). Consider $N=2$ marginals, supported on a uniform mesh with $\ell = 100$ gridpoints in the interval  $[0,1]$ (Figure \ref{fig:OneDimMs}), and the standard quadratic cost $c(x,y)=|x-y|^2$.
\begin{figure}
    \centering
    \includegraphics[width=0.4\textwidth]{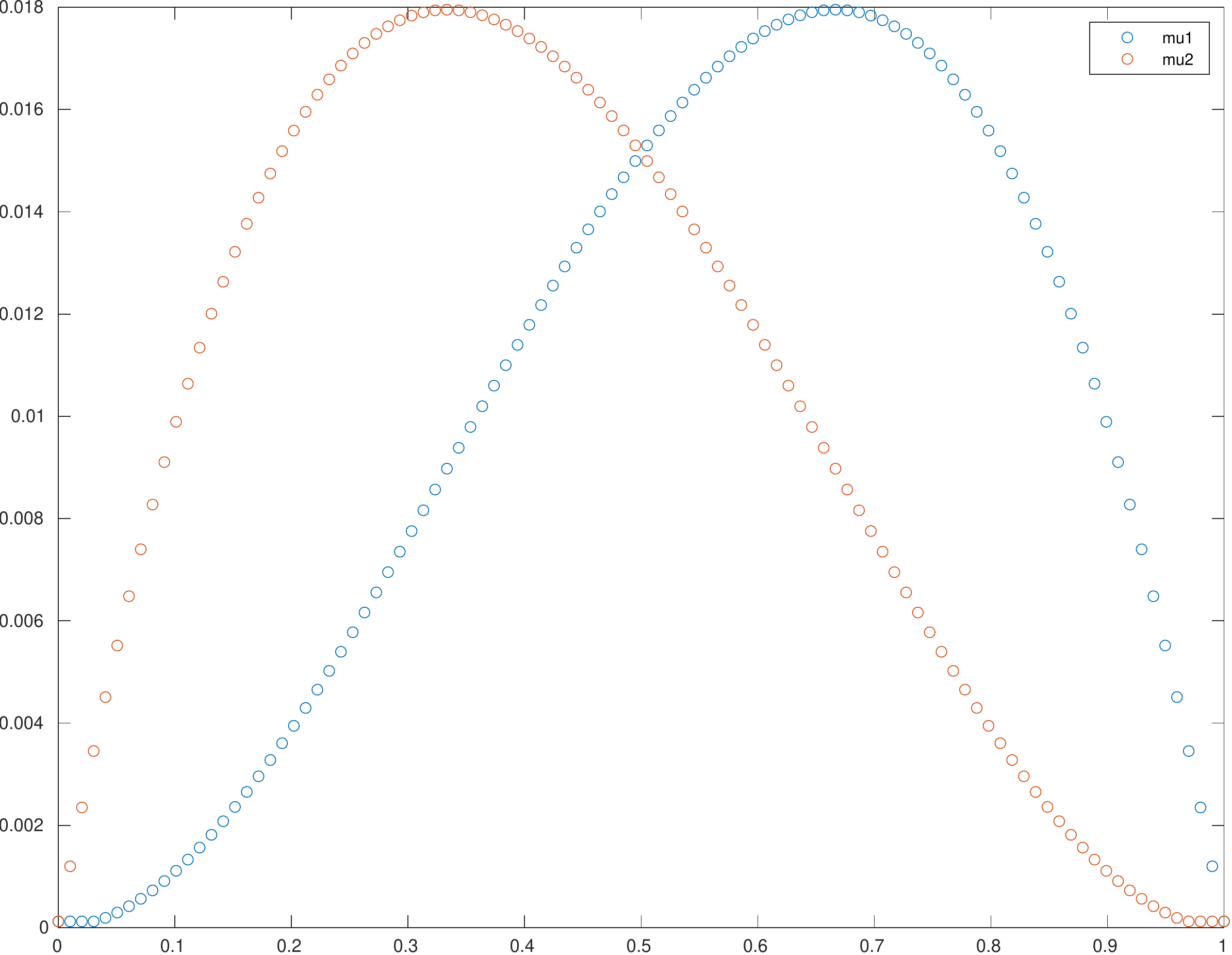}
    \caption{The two marginals in Example \ref{sec:small}}
    \label{fig:OneDimMs}
\end{figure}
We chose $\mu_2$ to be the reflection $\mu_2(x) = \mu_1(1-x)$. A simple and obviously non-optimal transport plan belonging to $\Pi(\mu_1,\mu_2)$ is $\gamma_0:=T_\sharp \mu_1$ with $T(x)=(x,1-x)$. Its support (augmented with random configurations) provides us with a feasible initial reduced configuration set $\Omega$. (Using the initial configuration from the NW corner rule is not instructive in this example as it would already provide the support of the optimal plan, and  GenCol would find this plan in one iteration.) Starting from this set, 
GenCol finds the optimal plan to machine precision after less than 900 iterations. The evolution of optimal plan, Kantorovich potentials, and cost during the iteration is presented in Figures \ref{fig:OneDim} and \ref{fig:OneDimCost}.
\begin{figure} 
    \centering
    \includegraphics[width=\textwidth]{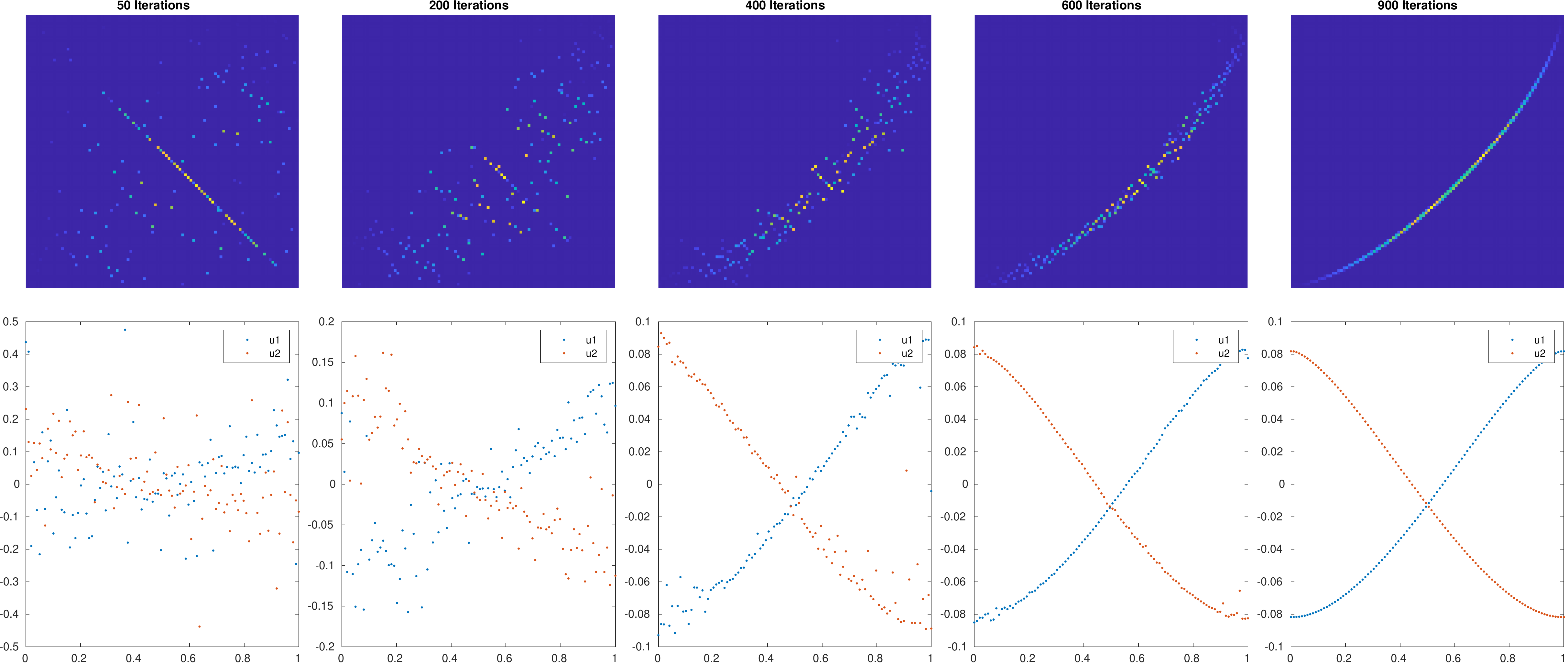}\\ [\baselineskip]
    \caption{Top row: evolution of the optimal transport plan under the GenCol algorithm. Bottom row: evolution of the Kantorovich potentials. GenCol produces a sparse solution in every iteration, converging to the exact discrete solution. The plans and potentials successively ``learn'' from each other, as in adversarial learning.}
    \label{fig:OneDim}
\end{figure}
\begin{figure}
    \centering
    \includegraphics[width = 0.6\textwidth]{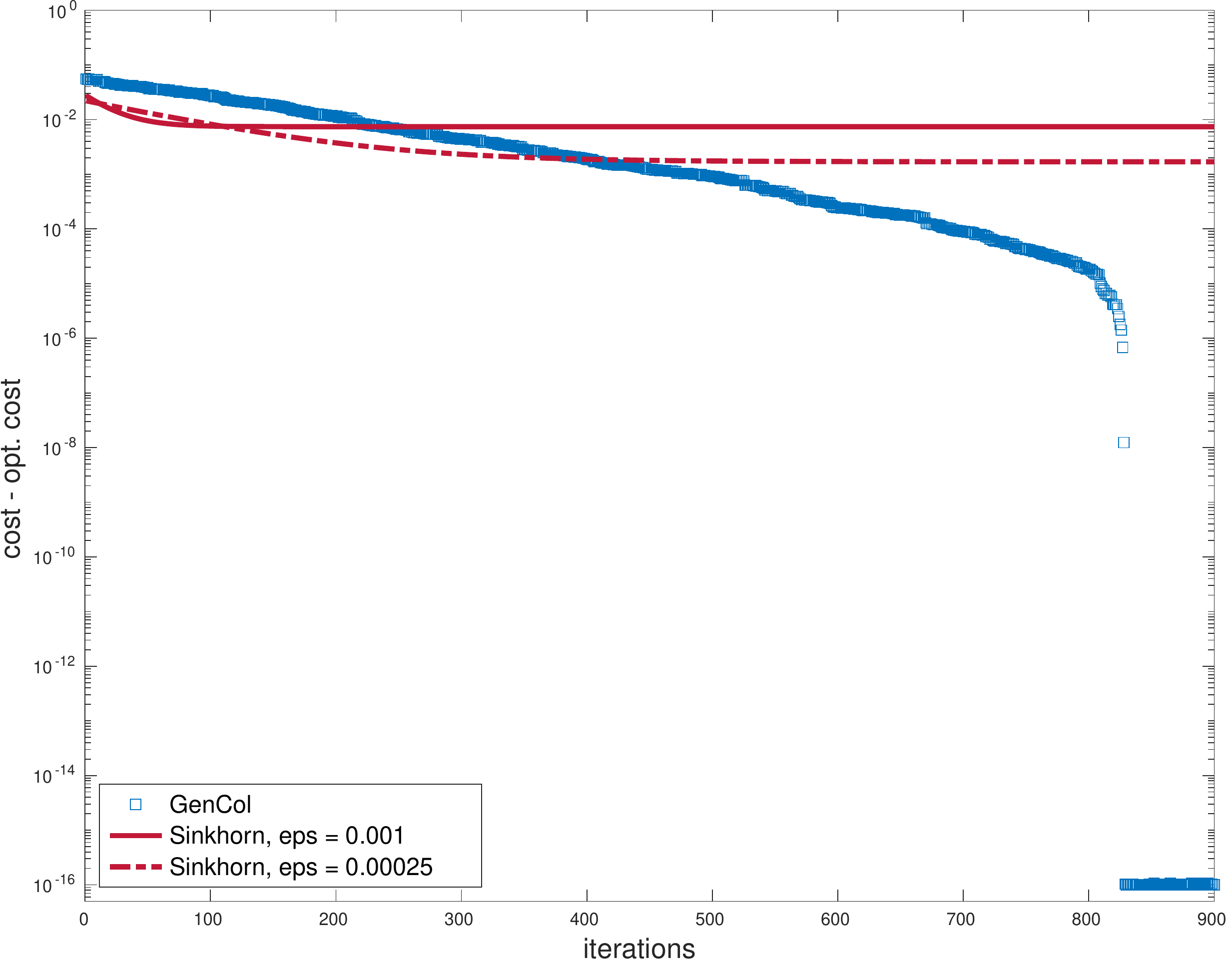}
    \caption{Convergence of GenCol in terms of iterations, for the two-marginal example in Figure \ref{fig:OneDim}. The plot reveals exponential convergence at a steady rate (blue squares). For few-marginal problems one can also use the Sinkhorn algorithm, whose performance is shown for comparison. If low accuracy ($\sim 10^{-2}$) is sufficient, Sinkhorn is much faster, and hence the recommended method (solid red line). For moderate accuracy ($\sim 10^{-3}$), the Sinkhorn parameter needs to be set to a lower value and both methods become comparable, requiring $\sim 400$ iterations. After convergence, GenCol delivers the exact solution and Sinkhorn a regularization thereof.
    }
    \label{fig:OneDimCost}
\end{figure}
One can see how improved configurations are found and accepted. First, the Kantorovich potentials are updated to account for violated constraints. The optimal plan then follows by occupying cheaper configurations. The sparsity of the optimal plan is kept in every iteration.

\if 0
\subsection{Barycenter of synthetic sparse data}
We apply our method to a synthetic generated data set: $\ell = 30$ support points of $N=12$ marginals are drawn uniformly on $[0,3]^2$, and shifted afterwards. The mass located at each support point is generated from a normal density to give the distributions some structure. The marginals are shown in Figure \ref{fig:SynthData01}. The resulting full LP in the multi-marginal formulation has $30^{12}$ variables and $360$ constraints (11 redundant).

\begin{figure}
    \tiny
    \centering
    \includegraphics[width = \textwidth]{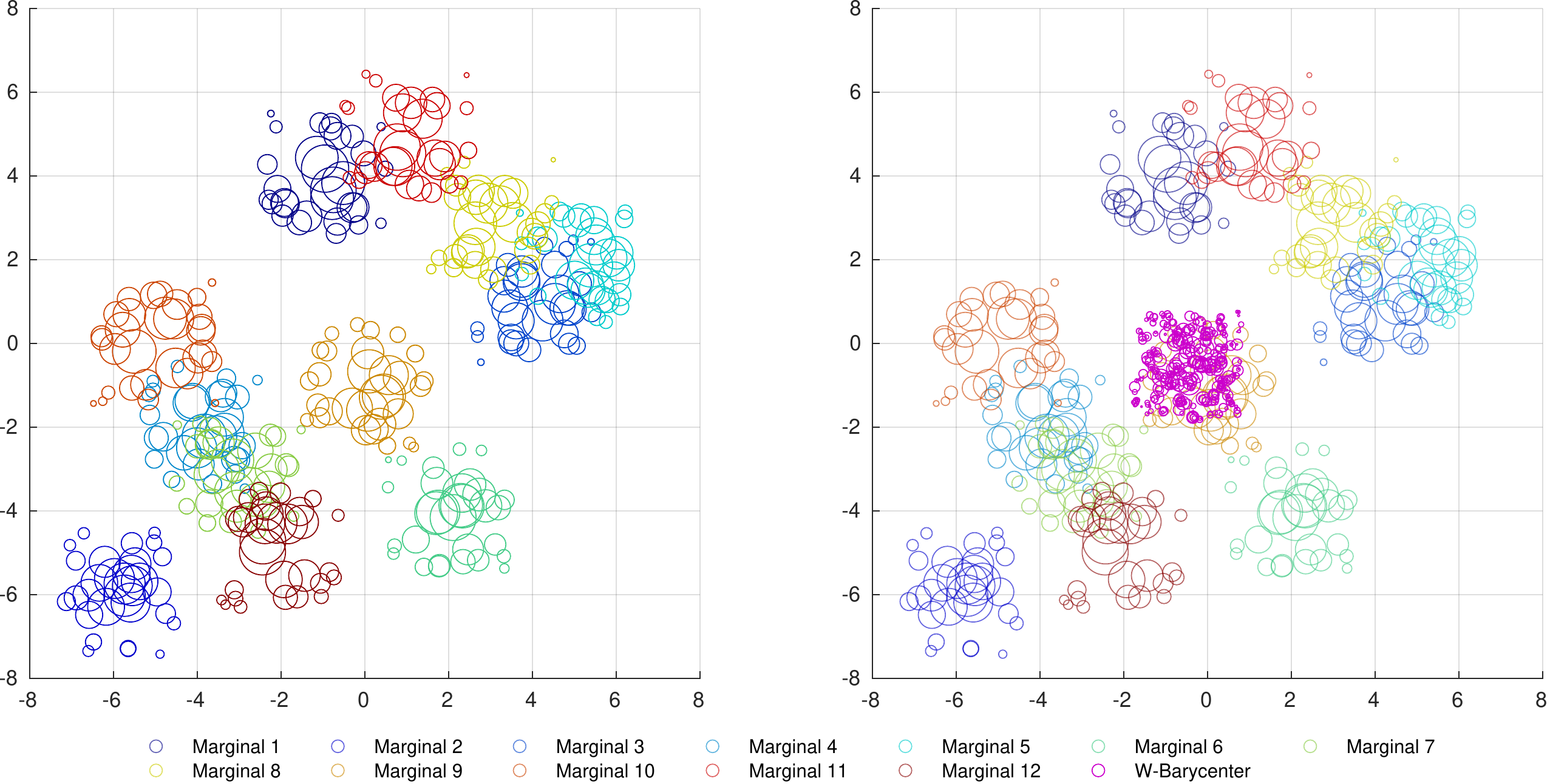}
    \caption{$N =12$ discrete marginals in $\R^2$. The radius of the circles correspond to the mass at each point. The grid free barycenter is visualized in the right image}
    \label{fig:SynthData01}
\end{figure}

The barycenter is supported on 349 points. This was expected, because the supports are finite samples of continuous distributions and therefore in general position. The probability of a common euclidean barycenter for two active configurations is zero and therefore the upper bound for the sparse solution of the barycenter $12(30-1) + 1 = 349$ is attained.

We used monotone rearrangement to find a initial feasible configuration. The convergence is presented in Figure \ref{fig:SynthData02}.

\begin{figure}
    \centering
    \includegraphics[height = 0.6\textheight]{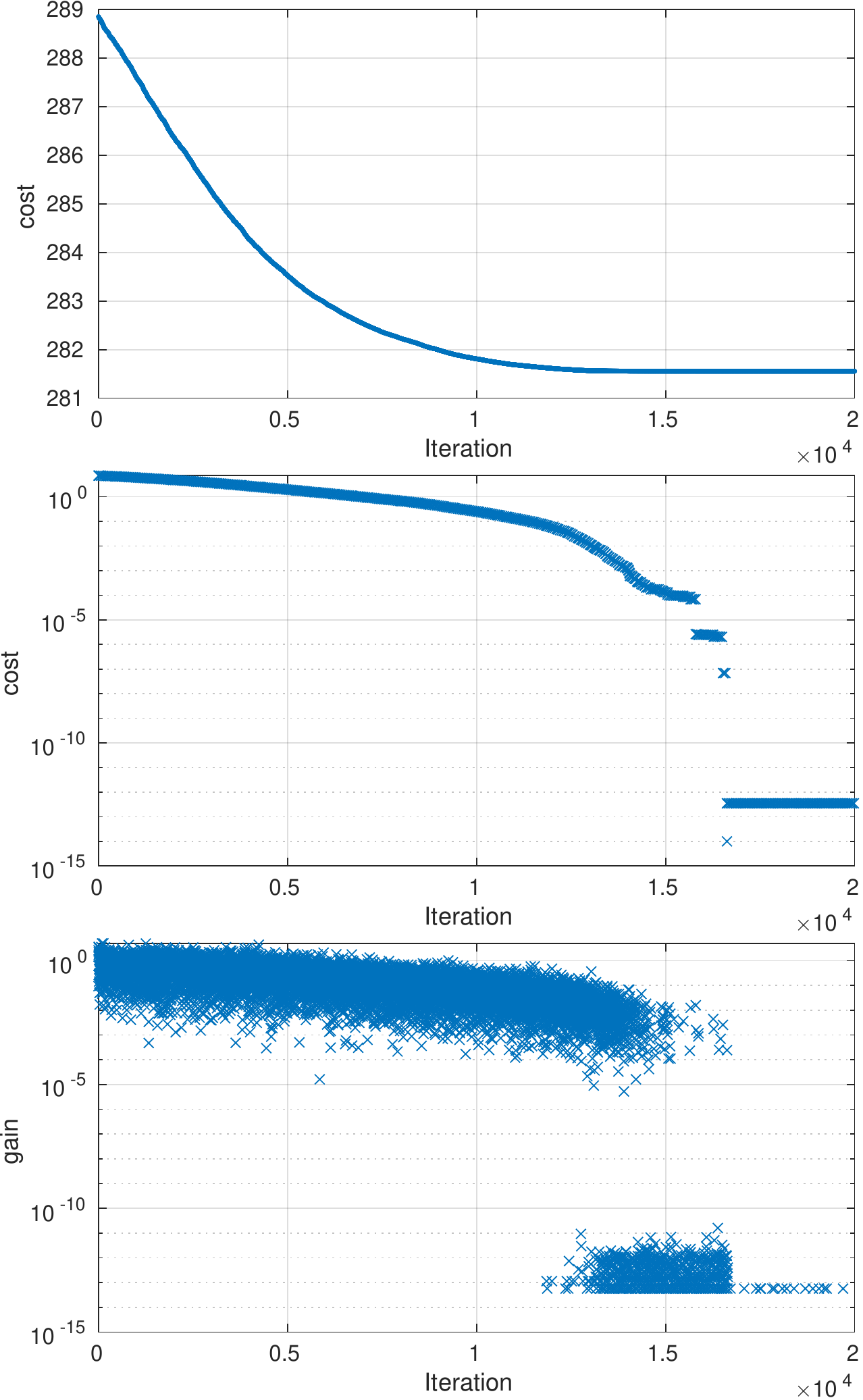}
    \caption{The convergence rate is exponential. After ca 16000 iterations machine precision is reached and only equivalent configurations are proposed and accepted.}
    \label{fig:SynthData02}
\end{figure}

The algorithm took ca. 16000 iterations to converge. Afterwards new configurations are still found but with almost no gain and no improvement for the cost. There are $349\cdot 12\cdot 30 = 125640$ elements in $\Omega'$. We computed the gain $\sum_iu_i(r_i)-c(r)$ for all of them and found a maximal gain of $5.6843\cdot10^{-14}$ out of 14 configurations with positive gain. 


\textcolor{red}{The term "gain" might be misleading because it \textbf{does not} indicate the amount the cost is decreased if the configuration is added to the set $I$. It rather is the violation of the dual master problem by not-considering the configuration $j$. The improvement in the cost by adding $j$ can only be determined by resolving the LP.}
\fi

\subsection{MNIST handwritten digits}
Next we computed the mesh-free Wasserstein barycenter of MNIST handwritten digits. 
The MNIST data set \cite{MNIST} is a well known benchmark data set in machine learning consisting of a collection of  handwritten digits (0 to 9), of the size of 28$\times$28 pixels.

We used 10 marginal images, which were drawn at random from the training set. The MMOT problem then has $28^{20} \approx 8.7\cdot10^{28}$ variables and $10\cdot 28^2 \approx 8000$ constraints. Hence by the theory from section \ref{sec:sparse} the barycenter is a superposition of only $\approx 8000$ Dirac measures. From a machine learning perspective, the map from the training set to the collection of Wasserstein barycenters can be viewed as a feature map yielding an efficient summary of main features of each digit. The usefulness of this feature map in classification problems, especially in reducing the need for large training sets, will be discussed elsewhere.




Figure \ref{fig:MNISTallBC} shows the accurate mesh-free barycenter for all digits, obtained by applying the GenCol algorithm to the multi-marginal problem \eqref{MMOT-pb}, \eqref{barycost}. As explained in the Introduction, the mesh-free barycenter is supported on an $N^2$ times finer grid than that of the marginals. The fixed-mesh barycenter, computed by applying the Sinkhorn algorithm to the coupled two-marginal formulation, is shown for comparison. 

We do not claim that the mesh-free GenCol barycenter is always superior to the fixed-mesh Sinkhorn barycenter. The latter is superior in speed in the present example. By contrast the former yields the true barycenter rather than a regularization, and is superior in resolution. 
It hence consitutes a viable alternative method, especially in high-accuracy applications or for input data with bad resolution. 

\begin{figure}
    \centering
    \includegraphics[width = 0.8\textwidth]{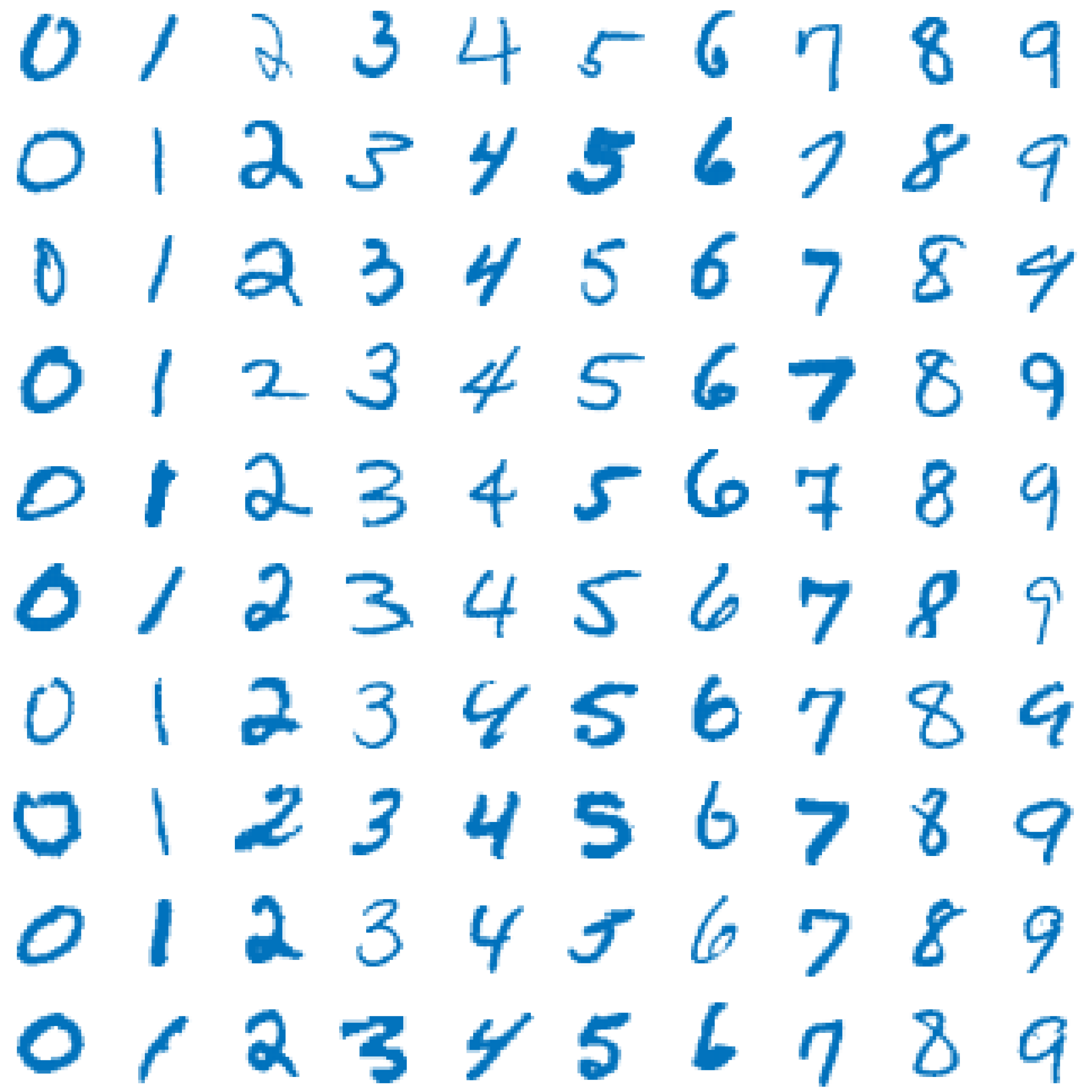}\\[-\baselineskip]
    \rule{0.8\textwidth}{.4pt}
    \vspace*{4pt}
    \includegraphics[width = 0.8\textwidth]{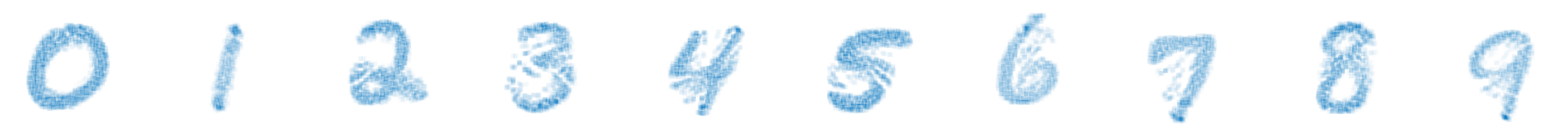}\\[-\baselineskip]
    \rule{0.8\textwidth}{.4pt}
    \vspace*{4pt}
    \includegraphics[width = 0.8\textwidth]{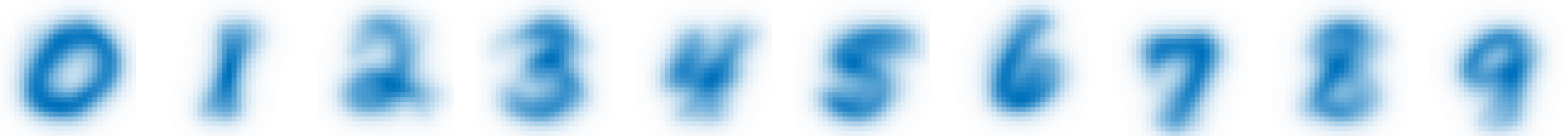}
    \caption{For each digit we drew 10 MNIST images at random and computed their barycenter. The mesh-free barycenters computed with 
    GenCol are presented in the second to last row. The last row shows the coupled-two-marginal barycenter computed with the Sinkhorn algorithm with $\varepsilon = 0.05$.}
    \label{fig:MNISTallBC}
\end{figure}

\subsection{Shape morphing}
In our next example we computed high-resolution weighted Wasserstein barycenters of three shapes to perform shape morphing. The input shapes had resolution $256 \times 256$, yielding $2.8 \cdot 10^{14}$ variables in the multi-marginal problem. The result is shown in Figure \ref{fig:Shapes3}. 

The collection of weighted Wasserstein barycenters computed with GenCol applied to the multi-marginal formulation accurately resolves the true trajectory of each pixel in continuous space, as explained in the Introduction. Shapes with sharp boundaries can be read off via smoothing and thresholding, see Figure \ref{fig:Shapes3_level}. 
%
%
%

\begin{figure}
    \centering
    \includegraphics[width = 0.8\textwidth]{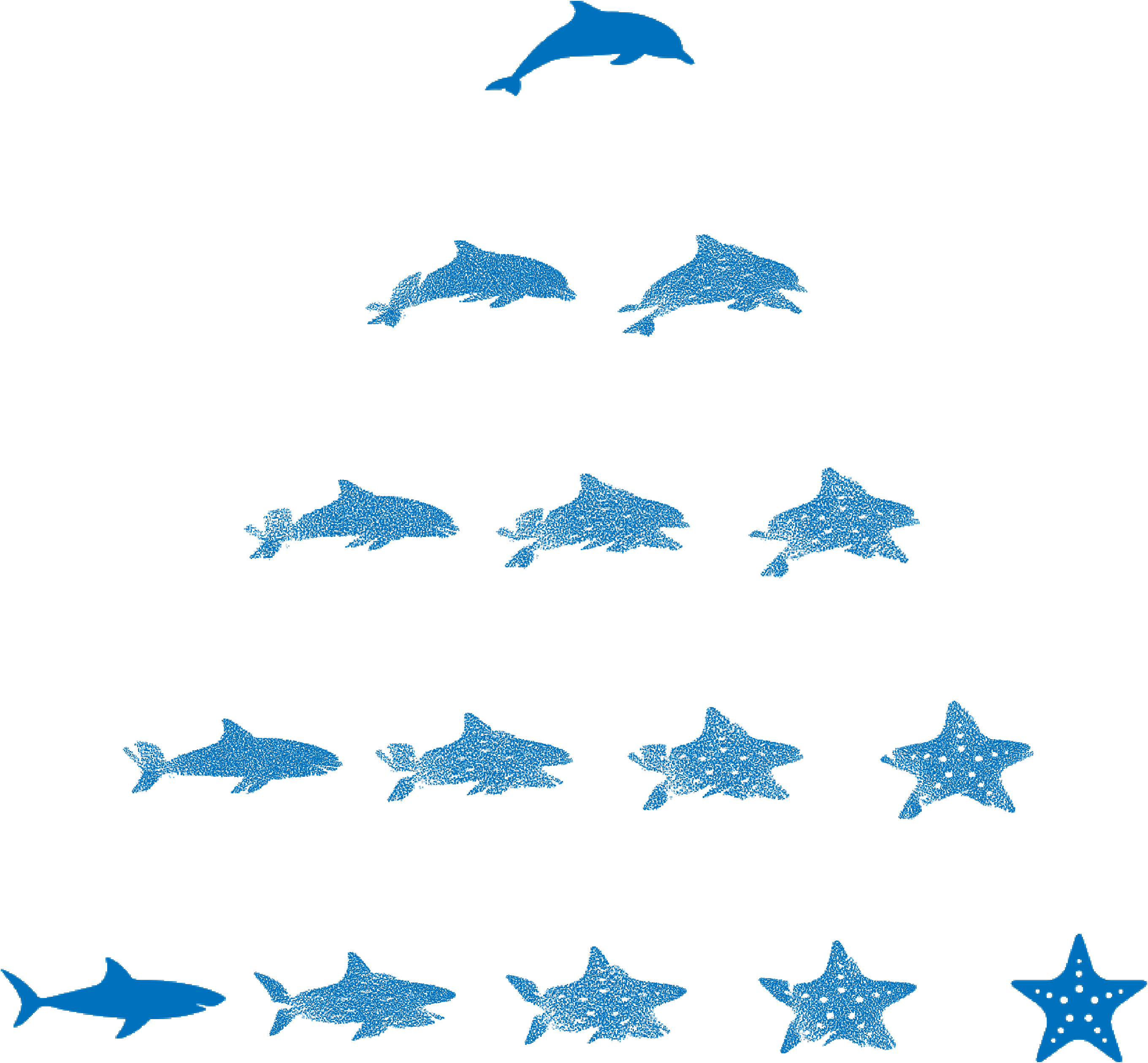}
    \caption{Accurate weighted mesh-free Wasserstein barycenters of three shapes computed with GenCol. One can track the trajectory of each pixel. The edges are the Wasserstein geodesics between two shapes.}
    \label{fig:Shapes3}
\end{figure}
\begin{figure}
    \centering
    \includegraphics[width = 0.8\textwidth]{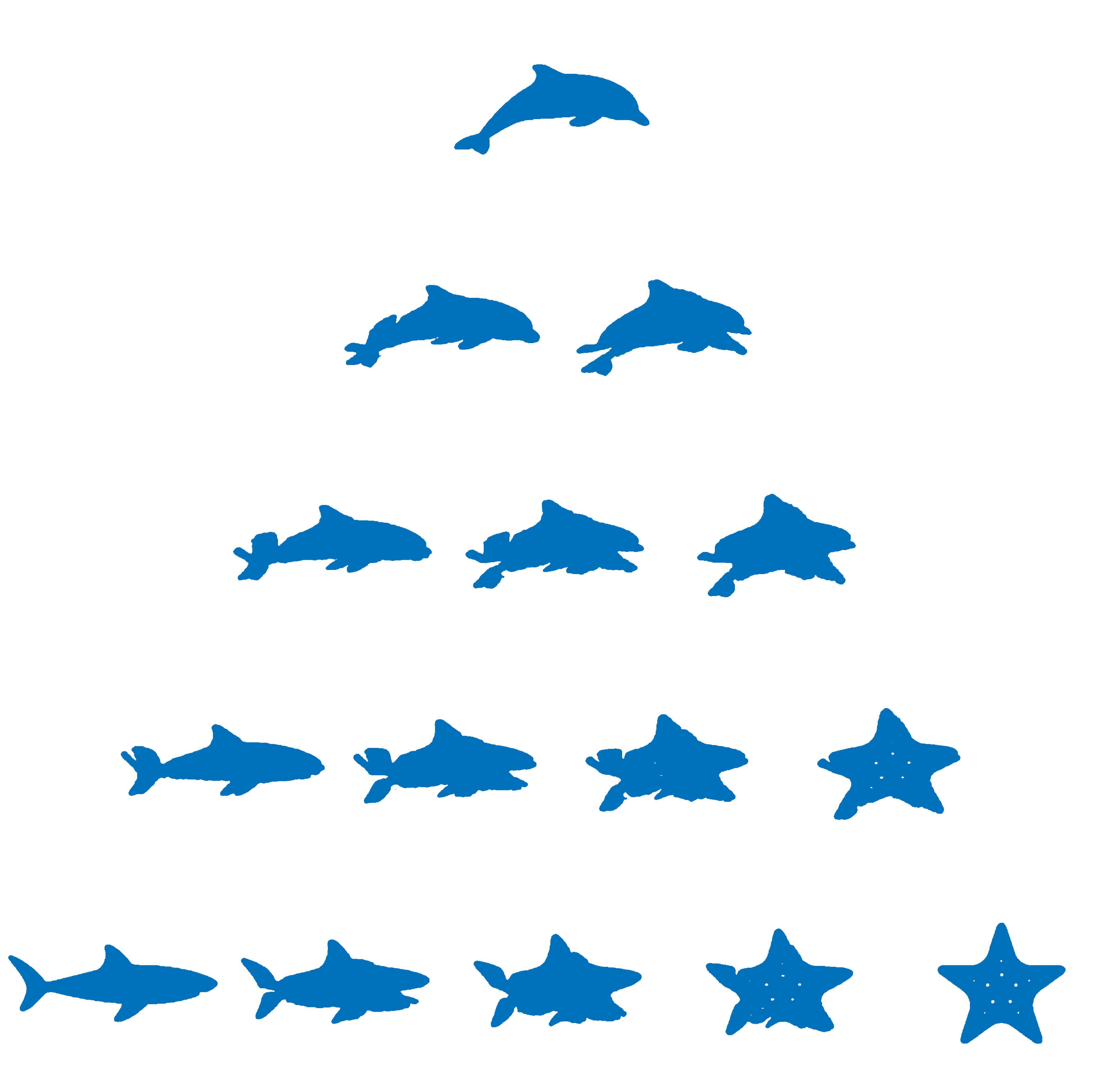}
    \caption{Post-processing the high-resolution images from Figure \ref{fig:Shapes3} via smoothing and thresholding produces shapes with 
    sharp boundaries. 
    Since the smoothing is added a posteriori, 
    it can easily be altered and optimized application specifically.}
    \label{fig:Shapes3_level}
\end{figure}

\subsection{Smooth interpolation in Wasserstein space}
In our final example we computed higher order Wasserstein interpolations of given time series of probability measures on $\R$ and $\R^2$. The governing multi-marginal problem is given by eqs.~\eqref{MMOT-pb} and \eqref{splinecost} and the interpolating path of measures is given by eq.~\eqref{interpol}. For simplicity we used the approximate cost $\tilde{c}$ given in the Introduction. We used, respectively, 6 Gaussian marginals on 101 gridpoints and 5 Gaussian marginals on a 50 $\times$ 50 grid. The multi-marginal problem has $10^{12}$ respectively $10^{17}$ variables. The interpolations computed with GenCol are shown in Figure \ref{fig:WSplines}. 

Unlike first-order interpolation,  which produeces piecewise linear particle trajectories due to the fact that 
$$
  \mu_t = \bigl( (1-\lambda)id + \lambda T\bigr)_\sharp \mu_{t_i}
$$
where $t=(1-\lambda)t_i+\lambda t_{i+1}$ and $T$ is the optimal map from $\mu_{t_i}$ to $\mu_{t_{i+1}}$, we see that spline interpolation produces smooth particle trajectories. 

In contrast to the Wasserstein barycenter problem, the smooth interpolation problem does not admit a coupled two-marginal formulation,  underlining
the need for a many-marginal solver as presented here. 

\begin{figure}
    \centering
    \includegraphics[width = 0.46\textwidth]{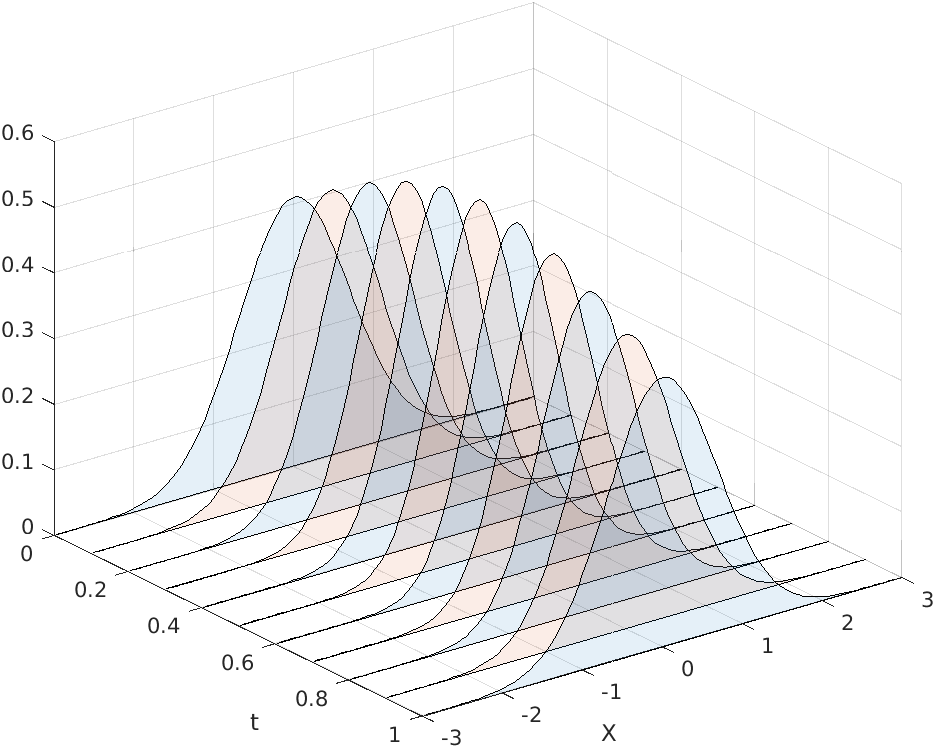} \hfill{}\includegraphics[width = 0.46\textwidth]{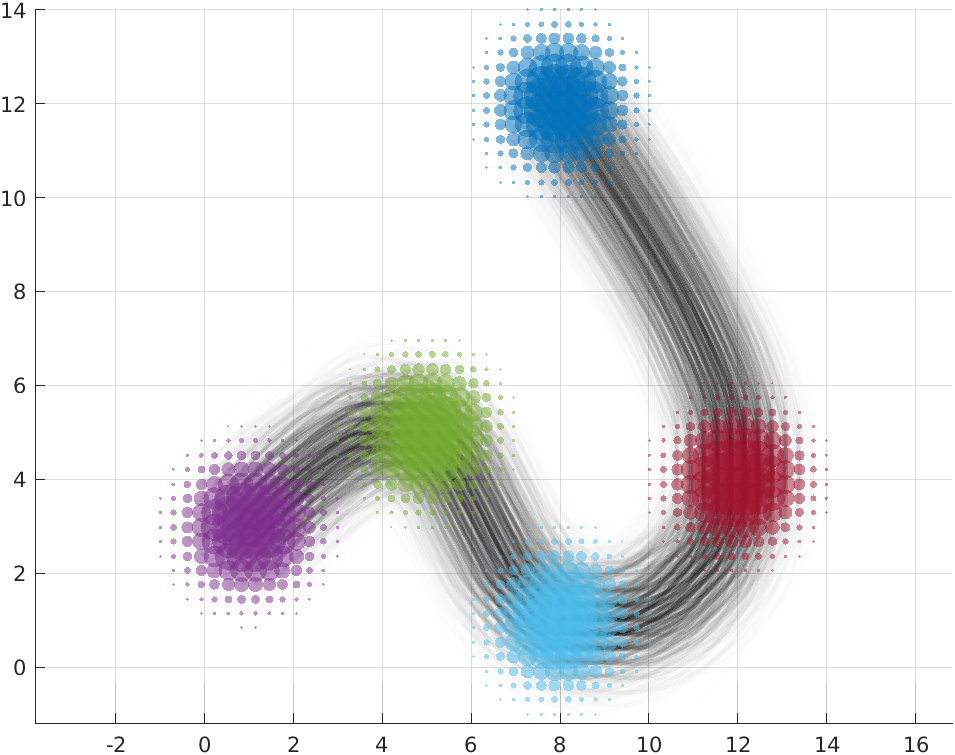}
    \caption{Wasserstein cubic splines computed with GenCol. Left: The six blue 1-D Gaussians represent the marginals. The red distributions are interpolations, visualized by applying a kernel to the discrete solution. Right: Wasserstein cubic spline through five 2-D Gaussian distributions. The lines indicate particle trajectories and the line darkness indicates the amount of mass transported.}
    \label{fig:WSplines}
\end{figure}

\section{Conclusions and outlook}
In all our examples the GenCol algorithm performed accurately and efficiently, allowing to treat multi-marginal optimal transport problems with up to $10^{30}$ unknowns with Matlab on a laptop. 

Key tasks for future work include a theoretical analysis of the observed accuracy and efficiency, and parallelization and implementation on high-performance hardware for  large-scale applications.

\printbibliography

@article{BenEtAl-15,
    author = {Benamou, Jean-David and Carlier, Guillaume and Cuturi, Marco and Nenna, Luca and Peyré, Gabriel},
    title = {Iterative Bregman Projections for Regularized Transportation Problems},
    journal = {SIAM Journal on Scientific Computing},
    volume = {37},
    number = {2},
    pages = {A1111-A1138},
    year = {2015},
    doi = {10.1137/141000439},
}

@article{ColEtAl-2015,
  title={Multimarginal optimal transport maps for one--dimensional repulsive costs},
  author={Colombo, Maria and De Pascale, Luigi and Di Marino, Simone},
  journal={Canadian Journal of Mathematics},
  volume={67},
  number={2},
  pages={350--368},
  year={2015},
  publisher={Cambridge University Press}
}

@article{FriSchVoe-21,
    author = {Friesecke, Gero and Schulz, Andreas S. and V\"{o}gler, Daniela},
    title = {Genetic Column Generation: Fast Computation of High-Dimensional Multimarginal Optimal Transport Problems},
    journal = {SIAM Journal on Scientific Computing},
    volume = {44},
    number = {3},
    pages = {A1632-A1654},
    year = {2022},
    doi = {10.1137/21M140732X}
}

@article{friesecke2018breaking,
  title={Breaking the curse of dimension in multi-marginal Kantorovich optimal transport on finite state spaces},
  author={Friesecke, Gero and Vögler, Daniela},
  journal={SIAM Journal on Mathematical Analysis},
  volume={50},
  number={4},
  pages={3996--4019},
  year={2018},
  publisher={SIAM}
}

@article{Fri-19,
  title={A simple counterexample to the Monge ansatz in multimarginal optimal transport, convex geometry of the set of Kantorovich plans, and the Frenkel--Kontorova model},
  author={Friesecke, Gero},
  journal={SIAM Journal on Mathematical Analysis},
  volume={51},
  number={6},
  pages={4332--4355},
  year={2019},
  publisher={SIAM}
}

@article{GerKauRaj-19,
  title={Nonexistence of Optimal Transport Maps for the Multimarginal Repulsive Harmonic Cost},
  author={Gerolin, Augusto and Kausamo, Anna and Rajala, Tapio},
  journal={SIAM Journal on Mathematical Analysis},
  volume={51},
  number={3},
  pages={2359--2371},
  year={2019},
  publisher={SIAM}
}

@article{dubins-62,
  title={On extreme points of convex sets},
  author={Dubins, Lester E},
  journal={Journal of Mathematical Analysis and Applications},
  volume={5},
  number={2},
  pages={237--244},
  year={1962},
  publisher={Elsevier}
}

@article{pass-15,
  title={Multi-marginal optimal transport: theory and applications},
  author={Pass, Brendan},
  journal={ESAIM: Mathematical Modelling and Numerical Analysis},
  volume={49},
  number={6},
  pages={1771--1790},
  year={2015},
  publisher={EDP Sciences}
}

@article{benamou2019second,
  title={Second-order models for optimal transport and cubic splines on the {W}asserstein space},
  author={Benamou, Jean-David and Gallou{\"e}t, Thomas O and Vialard, Fran{\c{c}}ois-Xavier},
  journal={Foundations of Computational Mathematics},
  volume={19},
  number={5},
  pages={1113--1143},
  year={2019},
  publisher={Springer}
}

@article{chen2018measure,
  title={Measure-valued spline curves: An optimal transport viewpoint},
  author={Chen, Yongxin and Conforti, Giovanni and Georgiou, Tryphon T},
  journal={SIAM Journal on Mathematical Analysis},
  volume={50},
  number={6},
  pages={5947--5968},
  year={2018},
  publisher={SIAM}
}

@article{agueh2011barycenters,
  title={Barycenters in the Wasserstein space},
  author={Agueh, Martial and Carlier, Guillaume},
  journal={SIAM Journal on Mathematical Analysis},
  volume={43},
  number={2},
  pages={904--924},
  year={2011},
  publisher={SIAM}
}

@article{PeyCut-19,
  title={Computational optimal transport: With applications to data science},
  author={Peyr{\'e}, Gabriel and Cuturi, Marco and others},
  journal={Foundations and Trends{\textregistered} in Machine Learning},
  volume={11},
  number={5-6},
  pages={355--607},
  year={2019},
  publisher={Now Publishers, Inc.}
}

@manual{mosek,
   author = "MOSEK ApS",
   title = "The MOSEK optimization toolbox for MATLAB manual. Version 9.3.",
   year = 2021,
   url = "https://docs.mosek.com/9.3/toolbox/index.html"
 }

@online{MNIST,
  title={{MNIST} handwritten digit database},
  author={LeCun, Yann and Cortes, Corinna and Burges, Christopher J.C.},
  url={http://yann.lecun.com/exdb/mnist},
  urldate = {2021-10-01}
}

@phdthesis{Nen-PhD-16,
	Author = {Nenna, Luca},
	Title = {Numerical methods for multi-marginal optimal transportation},
	Year = {2016}
}

@article{FriGorGer-22, 
   Author = {Friesecke, Gero and Gori-Giorgi, Paola and Gerolin, Augusto},
   Title = {The strong-interaction limit of density functional theory},
   journal = {arXiv 2202.09760},
   Year = {2022}
}

@article{GanSwi-98,
  title={Optimal maps for the multidimensional Monge-Kantorovich problem},
  author={Gangbo, Wilfrid and {\'S}wi{\k{e}}ch, Andrzej},
  journal={Communications on {P}ure and {A}pplied {M}athematics},
  volume={51},
  number={1},
  pages={23--45},
  year={1998},
  publisher={Wiley Online Library}
}

@inproceedings{cuturi2014fast,
  title={Fast computation of Wasserstein barycenters},
  author={Cuturi, Marco and Doucet, Arnaud},
  booktitle={International conference on machine learning},
  pages={685--693},
  year={2014},
  organization={PMLR}
}

@article{schmitzer2019stabilized,
  title={Stabilized sparse scaling algorithms for entropy regularized transport problems},
  author={Schmitzer, Bernhard},
  journal={SIAM Journal on Scientific Computing},
  volume={41},
  number={3},
  pages={A1443--A1481},
  year={2019},
  publisher={SIAM}
}

@article{cuturi2016smoothed, 
  title={A smoothed dual approach for variational Wasserstein problems},
  author={Cuturi, Marco and Peyr{\'e}, Gabriel},
  journal={SIAM Journal on Imaging Sciences},
  volume={9},
  number={1},
  pages={320--343},
  year={2016},
  publisher={SIAM}
}

@article{heinemann2022randomized,
  title={Randomized Wasserstein Barycenter Computation: Resampling with Statistical Guarantees},
  author={Heinemann, Florian and Munk, Axel and Zemel, Yoav},
  journal={SIAM Journal on Mathematics of Data Science},
  volume={4},
  number={1},
  pages={229--259},
  year={2022},
  publisher={SIAM}
}

@article{yang2021fast,
  title={A fast globally linearly convergent algorithm for the computation of Wasserstein barycenters},
  author={Yang, Lei and Li, Jia and Sun, Defeng and Toh, Kim-Chuan},
  journal={The Journal of Machine Learning Research},
  volume={22},
  number={1},
  pages={984--1020},
  year={2021},
  publisher={JMLRORG}
}

@article{brenier1991polar,
  title={Polar factorization and monotone rearrangement of vector-valued functions},
  author={Brenier, Yann},
  journal={Communications on {P}ure and {A}pplied {M}athematics},
  volume={44},
  number={4},
  pages={375--417},
  year={1991},
  publisher={Wiley Online Library}
}

@inproceedings{arjovsky2017wasserstein,
  title={Wasserstein generative adversarial networks},
  author={Arjovsky, Martin and Chintala, Soumith and Bottou, L{\'e}on},
  booktitle={International conference on machine learning},
  pages={214--223},
  year={2017},
  organization={PMLR}
}

\clearpage
\appendix
\section{Dubins' theorem for convex polytopes}\label{apdx:ExtremePoints}
Here we present a simple proof of the following result:
	
\begin{thm*}[Dubins' theorem for polytopes] 
Let $L$ be the intersection of a bounded convex polytope $K\subset\R^d$ with $n$ hyperplanes. Then every extreme point of $L$ is a convex combination of at most $n + 1$ extreme points of $K$.
\end{thm*}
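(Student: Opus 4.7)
The plan is to take an extreme point $x$ of $L$, write it as a convex combination of extreme points of $K$ using as few summands as possible, and show by a dimension-counting perturbation argument that this minimum is at most $n+1$. First I would use the standard fact that on a bounded convex polytope every point is a convex combination of (finitely many) extreme points, so $x = \sum_{i=1}^m \lambda_i v_i$ with $v_i$ extreme in $K$, $\lambda_i>0$, $\sum_i\lambda_i = 1$; among all such representations pick one with $m$ minimal and suppose for contradiction that $m \ge n+2$.

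Next, writing the hyperplanes as $H_j = \{y\in\R^d : \langle a_j,y\rangle = b_j\}$ for $j=1,\ldots,n$, I would look at perturbations $\alpha \in \R^m$ satisfying $\sum_i \alpha_i = 0$ (so the coefficient sum is preserved) together with $\sum_i \alpha_i\langle a_j,v_i\rangle = 0$ for every $j$ (so each hyperplane constraint is preserved to first order). The first condition cuts $\R^m$ down to an $(m-1)$-dimensional subspace and the $n$ further linear conditions remove at most $n$ more dimensions, so the assumption $m\ge n+2$ forces the existence of a non-zero $\alpha$.

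Then I would consider the one-parameter family
\[
 y_t \,:=\, \sum_{i=1}^m (\lambda_i + t\alpha_i)\,v_i \,=\, x + t\sum_{i=1}^m \alpha_i v_i.
\]
By construction $\langle a_j,y_t\rangle = b_j$ for all $j$, and for $|t|$ sufficiently small all coefficients $\lambda_i + t\alpha_i$ remain non-negative and sum to $1$, exhibiting $y_t$ as a convex combination of extreme points of $K$; hence $y_t\in K\cap\bigcap_j H_j = L$ on a small interval around $0$. I would then split into two cases. If $\sum_i\alpha_i v_i \ne 0$, then $t\mapsto y_t$ is a non-constant affine segment through $x$ lying in $L$, contradicting extremity of $x$. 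If instead $\sum_i\alpha_i v_i = 0$, then $y_t = x$ for all $t$; since $\alpha\ne 0$ and $\sum_i\alpha_i = 0$, some $\alpha_i$ is negative, so I would push $t$ up to the largest value $t^* > 0$ for which all $\lambda_i + t^*\alpha_i\ge 0$, at which point at least one coefficient vanishes, giving a convex representation of $x$ using strictly fewer vertices of $K$, contradicting minimality of $m$.

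The main obstacle I expect is not any single technical step but the clean handling of the degenerate case $\sum_i\alpha_i v_i = 0$: this is the case where the extremity of $x$ alone gives no information, and it is precisely the minimality of the chosen representation (rather than extremity) that produces the contradiction. Both sub-arguments need to be in place for the proof to go through, and conflating them is the natural pitfall.
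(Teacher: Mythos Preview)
Your proof is correct and complete; the handling of the degenerate case is exactly right and the two sub-arguments together cover everything. It is, however, a genuinely different route from the paper's.

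The paper argues geometrically via the face lattice of $K$: given an extreme point $p$ of $L$, it takes the minimal face $F$ of $K$ containing $p$, observes that $p$ lies in the relative interior of $F$, and notes that intersecting a $d'$-dimensional neighbourhood with $n$ hyperplanes drops the dimension by at most $n$; hence $d'\le n$ if $p$ is to be extreme in $L$, and Carath\'eodory applied to the $n$-dimensional face $F$ gives the $n+1$ bound. Your argument bypasses the face lattice entirely and works directly with a minimal convex representation, using a linear-algebra perturbation in coefficient space. In effect your degenerate case $\sum_i\alpha_i v_i=0$ is the Carath\'eodory reduction step carried out by hand, while your non-degenerate case replaces the dimension-of-face reasoning. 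What your approach buys is self-containment: no appeal to the face poset or to Carath\'eodory as a black box. What the paper's approach buys is a cleaner geometric picture linking the bound $n+1$ to the dimension of the smallest face of $K$ through $p$, and it makes transparent why equality can occur.
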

Dubins established this result for general closed and bounded convex sets \cite{dubins-62}. 
But for polytopes the result is much easier to prove because we can utilize their partially ordered set of faces, so we included a proof here for lack of a suitable reference.
	
Recall some convex geometry. A point $p$ in a convex set  $K\subset\R^d$ is called an \textit{extreme point} of $K$ if it cannot be written as convex combination of other points $x,y \in K$. Denote the set of extreme points by $\mathcal{E}(K)$. 

A convex \textit{polytope} can either be defined as the intersection of finitely many closed half spaces or as the convex hull of finitely many points. If one additionally assumes the polytope to be bounded, the two definitions are equivalent and from now on we stick to the second definition. 
	
If all the points are convex independent, i.e. none of them is a convex combination of the others, they form the set of vertices of the polytope, which coincides with the  set of extreme points.
	
A prototypical example of a polytope, and the one to which we applied the above theorem in section \ref{sec:sparse}, is the set of probability measures on a discrete space $X, |X| = \ell$. Its extreme points are the Dirac measures at each point of $X$. Every convex combination of those measures is again a measure on $X$ and every measure on $X$ can be written as convex combination of such Dirac measures. The measures can be identified with the probability vectors of length $\ell$ whose components represent the amount of mass at every point. In its vector representation the polytope is therefore spanned by the unit vectors in $\R^\ell$ and called \textit{probability simplex}.
	
A face of a convex polytope is any non-empty intersection of the polytope with a half space such that none of the (relative) interior points of the polytope lie on the boundary of the half space. The dimension of a face is defined to be the dimension of the smallest affine subspace containing it. The faces of a polytope form a partially ordered set, or poset, called \textit{face lattice}, where the ordering is given by set inclusion. The smallest faces are the vertices (0-dimensional faces) and the largest face  (containing all faces) is the polytope itself. Each face is itself a bounded convex polytope whose vertices or extreme points form a subset of the vertices or extreme points of $K$.
	
With those tools we are able to prove the above theorem.
\begin{proof}[Proof of the theorem]
First we show that $L$ is again a bounded convex polytope. Boundedness is clear, as is convexity (since the intersection of convex sets is again convex). Remember that a polytope can be defined as the intersection of finitely many closed half spaces. Any hyperplane $H = \{x  :  \langle a,x\rangle = d\}$ is the intersection of the half spaces $\langle a,x\rangle \geq d$ and $\langle a,x\rangle \leq d$. If one adds these two half spaces to the half-space representation of $K$, the resulting set is still the intersection of finitely many half-spaces and therefore again a polytope.
		
The statement about extreme points can be shown by using the face lattice of the polytope. The extreme points of $K$ are the 0-dimensional faces (vertices) of the polytope. All extreme points $p\in \mathcal{E}(K)$ not excluded by the intersection with the hyperplanes $H_1,...,H_n$ (i.e. $p \in H_1\cap ... \cap H_n$) remain in the set of extreme points $p\in \mathcal{E}(L)$, because they still cannot be written as convex combination of elements of the smaller set $L$.  
Conversely, let $p \in (H_1\cap ... \cap H_n)\cap K$. Let $F$ be the minimal face of $K$ containing $p$ and let $d'$ be its dimension. Due to the minimality of the face, $p$ lies in its relative interior, i.e. there exists an open $d'$-dimensional neighborhood around $p$ in $F$. The intersection of this neighborhood with $H_1\cap ... \cap H_n$ can reduce its dimensionality by at most $n$, so $p$ can only be an extreme point of $L$ if it lies in a face of dimension at most $n$ of $K$. But by Carath\'{e}odory's theorem, every element of an $n$-dimensional face is a convex combination of at most $n+1$ extreme points of the face. Since the latter points belong to $\mathcal{E}(K)$, the assertion follows.
	\end{proof}
\section{Wasserstein cubic splines cost function} \label{apdx:CubicSplineCost}
The Kantorovich formulation of the variational Wasserstein cubic spline problem seeks the optimal distribution $\mu$ on the space ${\mathcal H}$ of $C^2$-paths $x: [0,1] \to X$ which minimizes
\[\int_{{\mathcal H}} c(x) d\mu(x) \text{ subject to } (e_{t_i})_\#\mu = \mu_i \, \forall i=0,1,...,N.\]
Here $X$ is a subset of $\R^d$, the cost $c(x)$ is the spline energy $\int_0^1|\Ddot{x}(t)|^2dt$, and $e_t(x)=x(t)$ is the evaluation map at time $t$ so that $t\mapsto (e_t)_\sharp\mu$ describes a path of probability measures in $\calP(X)$. The marginal conditions prescribe this path at finitely many time points $t_i \in [0,1]$. As shown in \cite{benamou2019second}, any optimizer is supported on the set of cubic splines determined by $\{(t_i,x(t_i))\}$. Hence the problem can be reduced to solving the MMOT problem of minimizing 
\begin{equation*}
    \int_{X^{N+1}} c(x_0,x_1,\dots,x_N) \,d\gamma(x_0,x_1,\dots,x_N) 
    \text{ subject to } M_k\gamma = \mu_k  \forall k=0,1,...,N,
\end{equation*}
with $c$ given by \eqref{splinecost}. The interpolant at an arbitrary time $t\in[0,1]$ is then $(E_t)_\sharp \gamma$, where $E_t(x_0,...,x_N)=x(t)$ is the value of the optimal path in \eqref{splinecost} at time $t$.
The cost function $c$ still needs to be computed. The spline can be defined in the intervals $[t_j,t_{j+1}]$ via its second derivative
\[ \Ddot{x}(t) = M_j\frac{t_{j+1} - t}{h_{j+1}} + M_{j+1}\frac{t-t_j}{h_{j+1}}, \]
where $h_{j+1} := t_{j+1}-t_j$ and the $M_j$ are solutions of a linear system specified below. Let 
\[\lambda_0 = d_0 = 0, \mu_N = d_N = 0, \]
and for $j = 1,\dots,{N-1}$ define
\begin{gather*}
    \lambda_j := \frac{h_{j+1}}{h_j+h_{j+1}},\quad \mu_j := 1-\lambda_j,\\ 
    d_j := \frac{6}{h_j+h_{j+1}}\left(\frac{x_{j+1} - x_j}{h_{j+1}} - \frac{x_j - x_{j-1}}{h_j}\right).
\end{gather*}
Then the coefficients $M_j$ are the solutions of
\begin{equation*}
    \begin{pmatrix}
        2 & \lambda_0 & & & & 0\\
        \mu_1 & 2 & \lambda_1 & & & \\
        & \mu_2 & \cdot & \cdot & & \\
        & & \cdot & \cdot & \cdot & \\
        & & & \cdot & 2 & \lambda_{N-1} \\
        0 & & & & \mu_N & 2
    \end{pmatrix}
    \begin{pmatrix}
        M_0 \\ M_1 \\ \cdot \\ \cdot \\ \cdot \\ M_N
    \end{pmatrix}
    =
   \begin{pmatrix}
        d_0 \\ d_1 \\ \cdot \\ \cdot \\ \cdot \\ d_N
    \end{pmatrix}.
\end{equation*}
Given the coefficients $M_j$, the cost $c(x)$ is the sum of the integrals
\begin{align*}
    \int_{t_{j}}^{t_{j+1}} |\Ddot{x}(t)|^2\,dt &= \frac{1}{h_{j+1}^2}\int_{t_{j}}^{t_{j+1}} |\underbrace{(M_{j+1} - M_j)}_{=:a}t + \underbrace{(M_jt_{j+1} - M_{j+1}t_j)}_{=:b}|^2\,dt\\
    &= \frac{1}{h_{j+1}^2}\left[ \frac{a^2}{3}t^3 + abt^2 + b^2t \right]\Bigg|_{t_j}^{t_{j+1}}\\
    &= \frac{1}{h_{j+1}^2}\left[\frac{a^3}{3}(t_{j+1}^3-t_j^3) + ab(t_{j+1}^2-t_j^2)+b^2(t_{j+1}-t_j)\right]\\
    &= \frac{1}{h_{j+1}}\left[\frac{a^3}{3}(t_{j+1}^2+t_{j+1}t_j+t_j^2) + ab(t_{j+1}+t_j)+b^2\right].
\end{align*}
For equidistant time steps, this cost is well approximated by the simple expression given in the Introduction. 

\end{document}